%
%
%
%
%
%
%

\documentclass{amsart}
\usepackage{amsmath,amsthm,amssymb}
\newcommand{\F}{{\mathbb F}}
\newcommand{\Fp}{{{\F}_p}}
\renewcommand{\ker}{\operatorname{Ker}\nolimits}

\newenvironment{psmatrix}{\left(\begin{smallmatrix}}{\end{smallmatrix}\right)}
\def\Matrix#1#2#3#4{\left(\begin{matrix}#1&#2\\#3&#4\end{matrix}\right)}
\def\psMatrix#1#2#3#4{\begin{psmatrix}#1&#2\\#3&#4\end{psmatrix}}
\newtheorem{Thm}{Theorem}[section]
\newtheorem{Prop}[Thm]{Proposition}
\newtheorem{Lem}[Thm]{Lemma}
\title[Generalized invariants of reflection groups]{Generalized invariants of reflection groups in rank two}
\author{Jaume Aguad\'{e}}
\email{Jaume.Aguade@uab.cat}
\address{Departament de Matem\`atiques, Universitat Aut\`onoma de Barcelona, 08193 Cerdanyola del Vall\`es, Spain.}
\thanks{The author is partially supported by grants MTM2013-42293-P and 2009SGR-1092.}
\keywords{Reflection group, polynomial invariants, generalized invariants, stable invariants}
\begin{document}
\begin{abstract}
For each subgroup of $GL_2(\Fp)$ or order divisible by $p$, generated by (pseudo-)reflections, we compute the ideals of 
stable and generalized invariants. These groups and these ideals are related to the cohomology of compact Lie 
groups, Kac-Moody groups and $p$-compact groups.
\end{abstract}

\maketitle

%
%
%
%
%
%
%
%

\section{Introduction}

The invariant theory of finite reflection groups (also called pseudoreflection groups) in positive characteristic 
dividing the order of the group displays a series of phenomena which do not appear when the order of the group is a unit 
in the base field. Some of these phenomena are not yet well understood. Generalized invariants and stable invariants are 
two of these phenomena. They were defined by Kac in 1985 (\cite{Kac-Lie}) based on work of Demazure in 1973 
(\cite{Demazure}), to provide a purely Weyl group explanation of the mod $p$ cohomology of the compact connected Lie 
groups. If the order of the reflection group $W$ is prime to $p$, then both generalized and stable invariants coincide 
with the standard polynomial invariants of $W$, but if this non-modularity condition fails, the generalized invariants 
and the stable invariants seem to be very hard to compute and they are only known in a few trivial cases. For instance, 
neither the generalized nor the stable invariants of the general linear group over a finite field are known and it has 
been conjectured that they are trivial, in the sense that they coincide with the ordinary invariants, i.e.\ 
the Dickson invariants.

In this paper we compute the generalized and the stable invariants of all reflection subgroups of $GL_2(\Fp)$ 
of order divisible by $p$. Our results support the conjecture above. In a final section, we review the appearance of 
these rank two reflection groups in topology, as mod $p$ reductions of Weyl groups of some families of topological 
groups, like Lie groups, Kac-Moody groups or $p$-compact groups.

%
%
%
%
%
%
%
%

\section{Preliminaries and notations}
Most of what we need in this paper can be found in the introduction to \cite{Larry-Ring}, but we include it here for 
completeness and in order to fix the notations that we will be using along the paper. The twin books \cite{Larry-book} 
and \cite{Benson} are excellent references for anything that is stated here without a proof.

An element $\sigma\in GL_n(\Fp)$ is a \emph{reflection} if $\sigma-1$ has rank $1$. Some authors call this a 
\emph{pseudoreflection} and reserve the word reflection for the case in which $\sigma$ is diagonalizable 
and $\sigma^2=1$. We call $x_1,\ldots,x_n$ the standard basis of the $\Fp$-vector space $\Fp^n$ and then we consider 
the linear action of $GL_n(\Fp)$ on the polynomial ring $P_n:=\Fp[x_1,\ldots,x_n]$. Some authors consider the dual 
action. If $G$ is any subgroup of $GL_n(\Fp)$, then the polynomials in $P_n$ invariant under all $g\in G$ form the ring 
of invariants $P_n^G$. If this ring of invariants is a polynomial ring, then the group $G$ is generated by reflections 
and $P_n$ is a free $P_n^G$-module of rank equal to $\lvert G\rvert$. If a group $G\subseteq GL_n(\Fp)$ is generated by 
reflections we say that $G$ is a \emph{reflection group}, but we have to keep in mind that being a reflection group is 
not an intrinsic property of a group, but a property of a representation of this group.

The ring of coinvariants of $G\subseteq GL_n(\Fp)$ is the quotient of $P_n$ by the ideal $J_1(G)$ generated by the elements in $P_n^G$ of positive degree.
$$(P_n)_G:=P_n/J_1(G)=\Fp\otimes_{P_n^G}P_n.$$
If $P_n^G$ is a polynomial algebra, then $(P_n)_G$ is an algebra with Poincar{\'e} duality.
The ring of coinvariants inherits an action of $G$ and if $\lvert G\rvert$ is prime to $p$, then $(P_n)_G$ coincides with the regular representation of $G$. In particular, $(P_n)_G$ has no invariants of positive degree. However, when $p$ divides the order of $G$, the ring $(P_n)_G$ can have invariants of positive degree and we van inductively define a chain of ideals of $P_n$  
$$J_1(G)\subseteq J_2(G)\subseteq\cdots$$ as follows:
$$A_0:=P_n,\quad A_i:=(A_{i-1})_G,\quad J_i(G):=\ker\{P_n\to A_{i}\}.$$
We can also consider the ideal $J_\infty(G):=\cup_iJ_i(G)$ which is called the ideal of \emph{stable invariants} of $G$. As we said before, if the order of $G$ is prime to $p$, all these ideals coincide with the ideal $J_1(G)$ of ordinary invariants of $G$. We also refer to the sequence $\{J_i(G)\}_{i=1}^\infty$ as the sequence of ideals of stable invariants of $G$.

The ring of invariants of a reflection group of order prime to $p$ is a polynomial algebra, but this fails if we do not assume that $\lvert G\rvert$ is prime to $p$. Hence, one can ask about which reflection groups in  $GL_n(\Fp)$ have polynomial rings of invariants. The work of Kemper-Malle (\cite{Kemper-Malle}) provides a complete list of all irreducible groups in $GL_n(\F_{p^r})$ with polynomial rings of invariants, for all values of $n$, $p$, $r$. 

Given a reflection $\sigma\in GL_n(\Fp)$, Demazure (\cite{Demazure}) and Kac-Peterson (\cite{Kac-Pet}, \cite{Kac-Lie}) 
defined an $\Fp$-linear operator $\Delta_\sigma:P_n\to P_n$ of degree $-1$ as follows. Chose a non zero vector 
$v_\sigma$ in the image of $\sigma-1$, then $$\Delta_\sigma(f):=\frac{\sigma(f)-f}{v_\sigma}.$$ These operators are 
well defined up to a non zero multiplicative constant. They behave as twisted derivations 
$$\Delta_\sigma(fg)=\Delta_\sigma(f)g+\sigma(f)\Delta_\sigma(g)$$ and play an important role in understanding the 
cohomology of classifying spaces of compact connected Lie groups (and, more in general, Kac-Moody groups) at torsion 
primes (see \cite{Kac-Lie} and \cite{Larry-Lie}). These operators were used by Kac-Peterson to define the \emph{ideal of 
generalized invariants} $I(\mathcal{S})$ of a set of reflections $\mathcal{S}\subset GL_n(\Fp)$. We say that a 
polynomial $f\in P_n$ of positive degree $k$ is a generalized invariant with respect to $\mathcal{S}$ if for any 
sequence $\{\sigma_1,\ldots,\sigma_k\}$ of elements of $\mathcal{S}$ we have that 
$$\Delta_{\sigma_1}\cdots\Delta_{\sigma_k}(f)=0.$$
It can be proved (\cite{Larry-Ring}, lemma 2.1) that the generalized invariants form an ideal $I(\mathcal{S})\subset P_n$. By definition, this ideal depends on the set $\mathcal{S}$ and not just on the reflection group $G(\mathcal{S})$ generated by $\mathcal{S}$. There is an interesting relation between generalized invariants and stable invariants (\cite{Larry-Ring}, lemma 3.2)
$$J_1(G(\mathcal{S}))\subseteq I(\mathcal{S})\subseteq J_\infty(G(\mathcal{S})).$$
In general, these three ideals can be different. If the reflections in $\mathcal{S}$ are diagonalizable, then 
$I(\mathcal{S})= J_\infty(G(\mathcal{S}))$. If the order of $G(\mathcal{S})$ is prime to $p$, then ordinary, stable and 
generalized invariants coincide. 

As said before, for reflection groups of order divisible by $p$ the ring of invariants may fail to be a polynomial ring. 
However, a very interesting property of the ideal of generalized invariants is given in the following theorem of 
Kac-Peterson (\cite{Kac-Pet}): $I(\mathcal{S})$ is always generated by a regular sequence of length $n$.

In this paper we will be dealing with the case of $GL_2(\Fp)$ and we use the following notations. $P$ will be the polynomial ring on two generators $P:=\Fp[x,y]$. $\zeta$ will denote a generator of $\Fp^*$. We denote $\omega,\omega'\in GL_2(\Fp)$ the reflections 
$$\omega:=\Matrix1011,\quad\omega':=\Matrix1101.$$ 
The operators associated to these two reflections (choosing $v_\omega=(0,1)$ and $v_{\omega'}=(1,0)$) are denoted 
$\Delta$ and $\overline\Delta$, respectively. Diagonal matrices will we denoted this way
$$D(a,b):=\Matrix{a}00b.$$
There are some polynomials in $P$ that will appear often and we fix here the notation that we use for them. 
\begin{align*}
\delta&:=xy^p-x^py\\
 d_1&:=\frac1\delta\,(xy^{p^2}-x^{p^2}y)\\
 d_0&:=\frac1\delta\,(x^py^{p^2}-x^{p^2}y^p)\\
 \gamma_i&:=x^{i(p-1)}-x^{p-1}y^{(i-1)(p-1)}+y^{i(p-1)},\quad i\ge 1.
\end{align*}
$d_0$, $d_1$ are the well known Dickson polynomials and the polynomials $\gamma_i$ will appear in the computation of the stable invariants of the group $SL_2(\Fp)$.

Since we are only considering homogeneous polynomials, we sometimes simplify the notation by substituting the exponents of one of the variables by an asterisk. For instance, we can write $\gamma_i=x^{i(p-1)}-x^{p-1}y^{*}+y^{*}$.

%
%
%
%
%
%
%
%
%

\section{Reflection groups in dimension two and their invariants}
The well known classification of subgroups of $GL_2(k)$ for a finite field $k$ (see \cite{Suzuki}, 3.6) makes it easy to determine all reflection subgroups of $GL_2(\Fp)$. Let us introduce the following subgroups of $GL_2(\Fp)$.
\begin{itemize}
 \item For each $r|p-1$ let $L^r$ be the subgroup of all matrices $A\in GL_2(\Fp)$ such that $A^r$ has determinant 1. If $r=1$, this is the special linear group $SL_2(\Fp)$; if $r=p-1$, this is the full linear group $GL_2(\Fp)$. $L^r$ is a reflection group of order $rp(p^2-1)$ generated by the reflections $\omega$, $\omega'$, $D(\zeta^{\frac{p-1}r},1)$.
 \item For each $r,s|p-1$ let $U_{r,s}$ be the subgroup of all upper triangular matrices 
 $$\Matrix{\alpha}{\lambda}{0}{\beta}\in GL_2(\Fp)$$ such that $\alpha^r=\beta^s=1$. It is a reducible reflection group of order $rsp$ generated by the reflections $\omega$, $D(\zeta^{\frac{p-1}r},1)$, $D(1,\zeta^{\frac{p-1}s})$. 
 $U_{1,1}$ is the cyclic group of order $p$ in its representation as a Sylow subgroup of $GL_2(\Fp)$. As abstract groups, both $U_{2,1}$ and $U_{1,2}$ are isomorphic to the dihedral group of order $2p$, but they are not conjugate. These groups are considered in \cite{Larry-book} (pages 128--129), \cite{Larry-Ring} and also in \cite{ABKS} and \cite{AR} where they appeared as mod $p$ reductions of the (infinite) Weyl groups of some Kac-Moody groups. 
\end{itemize}
\begin{Thm}\label{grups}
 Let $G\subseteq GL_2(\Fp)$ be a reflection group with $\lvert G\rvert\equiv0\,(p)$. Up to conjugation, 
 \begin{enumerate}
  \item $G=U_{r,s}$ for some $r,s|p-1$, or
  \item $G=L^r$ for some $r|p-1$.
 \end{enumerate}
\end{Thm}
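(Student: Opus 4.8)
The plan is to start from the classification of subgroups of $GL_2(\Fp)$ for a finite field, as recorded in \cite{Suzuki}, 3.6, and then run through the list to single out those that are reflection groups of order divisible by $p$. Recall that a subgroup of $GL_2(k)$, $k$ finite of characteristic $p$, is, up to conjugacy, one of: (a) a subgroup of the diagonal (split torus) normalizer — i.e.\ either abelian of order prime to $p$ or contained in the monomial matrices; (b) a subgroup of the Borel (upper triangular) group; (c) a subgroup whose image in $PGL_2$ is $A_4$, $S_4$, or $A_5$; or (d) a subgroup containing $SL_2(k')$ for a subfield $k'\subseteq k$. Since we are over the prime field $\Fp$ the subfield issue disappears. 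The first reduction is that cases (a)$_{\mathrm{prime\ to\ }p}$ and (c) have order prime to $p$ for $p>5$ (and for small $p$ one checks they are not generated by reflections, or have already been captured), so the hypothesis $|G|\equiv 0\ (p)$ forces us into the Borel case (b) or the "large" case (d).

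Second, I would treat the two surviving cases. For the Borel case, $G$ sits inside the group $B$ of upper triangular matrices; writing $G\cap U_{1,1}$ for the intersection with the unipotent radical, $p\mid |G|$ forces $G\supseteq U_{1,1}$ (the unipotent radical has order $p$ and is the unique Sylow $p$-subgroup of $B$, so any $p$-divisible subgroup of $B$ contains it). Then $G$ is determined by its image in the diagonal torus $\Fp^*\times\Fp^*$; being generated by reflections forces this image to be a product of subgroups (a reflection in $B$ is either $\omega$-conjugate, a diagonal reflection, or a transvection), and one reads off that $G = U_{r,s}$ for the divisors $r,s\mid p-1$ giving the two torus factors. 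Here one must check that $U_{r,s}$ is genuinely generated by the three displayed reflections and that conversely no other upper-triangular reflection group of $p$-power-divisible order arises; this is a short direct computation with $2\times2$ matrices. For the large case, $G\supseteq SL_2(\Fp)$, so $G$ is the preimage of a subgroup of $\Fp^* = GL_2(\Fp)/SL_2(\Fp)$ under the determinant; every such subgroup is cyclic, generated by $\zeta^{(p-1)/r}$ for a unique $r\mid p-1$, which is exactly the defining condition for $L^r$, and one checks $L^r$ is generated by $\omega,\omega',D(\zeta^{(p-1)/r},1)$ (the first two generate $SL_2(\Fp)$, the last realizes the determinant quotient).

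The last point to nail down is that these are \emph{all} reflection subgroups with $p\mid |G|$, i.e.\ that no conjugate collisions or omissions occur, and that $U_{r,s}$ and $L^r$ are pairwise non-conjugate for distinct parameters (orders $rsp$ and $rp(p^2-1)$ distinguish the two families, and within each family the order, or the structure of the diagonal part, distinguishes the parameters). I expect the main obstacle to be purely bookkeeping rather than conceptual: carefully enumerating which elements of a Borel subgroup are reflections and verifying that a reflection-generated subgroup of a Borel must have its torus image split as a product $\mu_r\times\mu_s$ — in particular ruling out "diagonal" subgroups of $\Fp^*\times\Fp^*$ that are not products. Once that is settled, the theorem follows by combining the Suzuki classification with the case analysis above. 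The low-characteristic cases $p\le 5$, where the exceptional $PGL_2$-subgroups could in principle have order divisible by $p$, should be checked by hand to confirm they contribute nothing new (for $p=2,3$ they collapse into the Borel or $SL_2$ cases, and for $p=5$ the group $A_5\cong PSL_2(\F_5)$ only yields subgroups already of type $L^r$ after taking preimages).
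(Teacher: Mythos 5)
Your plan is correct and completable, but it routes the irreducible case through a different classification than the paper's own proof. Although the paper cites Suzuki 3.6 in the lead-in to the section, the proof it actually gives invokes Kemper--Malle's Table 8.2 of primitive irreducible rank-two reflection groups (whose only entries with order divisible by $p$ are the groups with $SL_2(\Fp)$ normal, i.e.\ the $L^r$; the $SL_2(\F_5)$ entry is excluded by an order count) together with a direct check that the imprimitive groups $G(m,r)$ satisfy $m\mid p^2-1$ and hence have order prime to $p$. You instead use Dickson's subgroup classification, which is more elementary and avoids quoting the reflection-group classification, at the price of a few verifications you only gesture at: that a subgroup of $GL_2(\Fp)$ whose image in $PGL_2(\Fp)$ contains $PSL_2(p)$ really contains $SL_2(\Fp)$ (perfectness of $PSL_2(p)$ for $p\ge5$ plus the nonexistence of a complement to the centre of $SL_2(p)$, with $p=2,3$ done by hand), and the disposal of the exceptional images $A_4$, $S_4$, $A_5$ at $p=2,3,5$ (order counts; $A_5$ does not fit in $PGL_2(\F_3)$). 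Your statement of the classification also omits the normalizer of the non-split torus, but those subgroups have order dividing $2(p^2-1)$, so the omission is harmless for this theorem. On the reducible case the two arguments coincide exactly: a reflection $\psMatrix{\alpha}{\lambda}{0}{\beta}$ has $\alpha=1$ or $\beta=1$, the condition $p\mid\lvert G\rvert$ forces the whole unipotent radical into $G$, and the torus image of a reflection-generated subgroup therefore splits as a product $\mu_r\times\mu_s$ --- the point you rightly single out as the crux, and which the paper settles with the two displayed matrix identities. In short, Kemper--Malle buys a shorter case list tailored to reflection groups, while Dickson buys self-containedness but obliges you to re-derive, within each subgroup class, which members are generated by reflections.
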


\begin{proof}
The result is straightforward for $p=2$ so we can assume $p>2$. According to Theorem 8.1 in \cite{Kemper-Malle},
the table 8.2 in \cite{Kemper-Malle} lists all irreducible primitive reflection groups in dimension two. The first 
entry in the list consists of groups $G$ in characteristic $p$ such that $SL_2(\Fp)\triangleleft G$. These are the 
groups $L^r$. The second entry consists of groups $G$ in characteristic $3$ such that $SL_2({\F}_5)\triangleleft G$. 
Since $\lvert SL_2({\F}_5)\rvert=120$, none of these groups can be a subgroup of $GL_2({\F}_3)$. All other groups in the 
list are representations over finite fields of the finite unitary reflection groups of dimension two in the list of 
Shephard-Todd (\cite{She-Todd}). All these groups have order prime to $p$. Hence, 
we only need to check the imprimitive groups and the reducible groups.
 
 Let $\theta_m$ denote a primitive $m$-th root of unity. The imprimitive reflection groups are the groups $$G(m,r):=\left\langle\Matrix0110,\;\Matrix{\theta_m^i}00{\theta_m^j},\;i+j\equiv0\,(r)\right\rangle$$ for any $m$ and any $r|m$. This group has order $2m^2/r$. The orthogonal groups $O^+_2(\Fp)$ and $O^-_2(\Fp)$ are particular cases of these groups. 
 Notice that $D(\theta_m,\theta_m^{-1})\in G(m,r)$ hence, if the group $G(m,r)$ can be represented in $GL_2(\Fp)$ then $\theta_m+\theta_m^{-1}\in\Fp$. Since $\Fp(\theta_m)$ is an extension of $\Fp(\theta_m+\theta_m^{-1})$ of degree $\le2$, if $G(m,r)$ can be represented in $GL_2(\Fp)$, then $\theta_m\in{\F}_{p^2}$ and $m|p^2-1$. This shows that all imprimitive reflection groups in $GL_2(\Fp)$ have order prime to $p$.
 
 Let $G$ be a reducible reflection subgroup of $GL_2(\Fp)$ and assume that $G$ is a subgroup of the group of upper triangular matrices in $GL_2(\Fp)$. 
 
 If $\psMatrix\alpha\lambda0\beta$ is a reflection, then $\alpha=1$ or $\beta=1$. Since $p$ divides the order of $G$, all upper unitriangular matrices are in $G$. Observe that 
 \begin{align*}
 \Matrix1{-\lambda/\beta}01\Matrix1\lambda0\beta&=\Matrix100\beta\\ 
 \Matrix1{-\mu}01\Matrix\alpha\mu01&=\Matrix\alpha001.
 \end{align*}
 Hence, if $G$ is generated by reflections
 $$G=\left\langle\Matrix1{\lambda_i}0{\beta_i},\Matrix{\alpha_j}{\mu_j}01,\;i=1,\ldots,n,\,j=1,\ldots,m\right\rangle$$
 then $D(\alpha_i,1)$, $D(1,\beta_j)$, $\omega'\in G,\;i=1,\ldots,n,\,j=1,\ldots,m$.
 Let $r$ be the order of the subgroup of $\Fp^*$ generated by $\alpha_1,\ldots,\alpha_n$ and let $s$ be the subgroup of $\Fp^*$ generated by $\beta_1,\ldots,\beta_m$. Then, it is clear that $G=U_{r,s}$.
\end{proof}

The ring of invariants of the general linear group is the well known Dickson Algebra (\cite{Benson}, \cite{Larry-book}). In the case of dimension two, $$P^{GL_2(\Fp)}=\Fp[d_1,d_0],\quad |d_1|=p^2-p,\,|d_0|=p^2-1.$$ 
Notice that $d_0=\delta^{p-1}$.
The invariants of the groups $L^r$ for $r|p-1$ are also well known: $$P^{L^r}=\Fp[d_1,\delta^r].$$

 The invariants of the reducible reflection groups $U_{r,s}$ are as follows.

\begin{Prop}\label{invariantsU}
 The invariants of $U_{r,s}$ form a polynomial algebra with generators $x^r$ and $(y^p-x^{p-1}y)^s$.
\end{Prop}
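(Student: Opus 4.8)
The plan is to exploit the extension structure of $U_{r,s}$. Let $N$ be its unipotent radical, the (normal) subgroup of matrices $\psMatrix1\lambda01$ with $\lambda\in\Fp$; this is exactly $U_{1,1}$, cyclic of order $p$, and the diagonal matrices $D(\alpha,\beta)$ with $\alpha^r=\beta^s=1$ give a splitting of the quotient $U_{r,s}/N\cong C_r\times C_s$. Consequently $P^{U_{r,s}}=(P^{N})^{C_r\times C_s}$, and it suffices to (i) compute $P^{N}$, and then (ii) pass to the invariants of the diagonal part, a group of order $rs$, which is prime to $p$ since $r,s\mid p-1$.

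For step (i), observe that $N$ is generated by the unipotent reflection $\omega'$, which acts by $x\mapsto x$ and $y\mapsto x+y$; hence $P^{N}=\ker\overline\Delta$. A direct check (using $(x+y)^p=x^p+y^p$) shows that $x$ and $v:=y^p-x^{p-1}y$ lie in $\ker\overline\Delta$; they are algebraically independent because $v$ has positive degree in $y$, so $\Fp[x,v]\subseteq P^{N}$ is a polynomial subring, and since $y$ satisfies the monic equation $y^p-x^{p-1}y=v$ over $\Fp[x,v]$ while eliminating $v$ from this relation recovers $\Fp[x,y]$, the ring $P$ is free over $\Fp[x,v]$ with basis $1,y,\dots,y^{p-1}$. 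To see that $\ker\overline\Delta$ is no bigger, write $f=\sum_{i=0}^{p-1}a_i\,y^i$ with $a_i\in\Fp[x,v]$; the twisted Leibniz rule together with $\overline\Delta(a_i)=0$ gives $\overline\Delta(f)=\sum_{i\ge1}a_i\,\overline\Delta(y^i)$, and since $\overline\Delta(y^i)=i\,y^{i-1}+(\text{terms of lower degree in }y)$ with $i\not\equiv0\pmod p$ for $1\le i\le p-1$, comparing the coefficient of the highest power of $y$ forces $a_{p-1}=0$, then $a_{p-2}=0$, and so on down to $a_1=0$. Thus $P^{U_{1,1}}=\Fp[x,v]$; this is also classical and can be quoted from \cite{Larry-book}, pp.\ 128--129.

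For step (ii), the generators $D(\zeta^{\frac{p-1}r},1)$ and $D(1,\zeta^{\frac{p-1}s})$ of $C_r\times C_s$ act on $P^{N}=\Fp[x,v]$ by scaling the two generators: the first multiplies $x$ by a primitive $r$-th root of unity and fixes $v$ (here one uses $(\zeta^{\frac{p-1}r})^{p-1}=1$), while the second fixes $x$ and multiplies $v$ by a primitive $s$-th root of unity. Since this action rescales each monomial, the invariant subring is spanned by the invariant monomials $x^av^b$ with $r\mid a$ and $s\mid b$, that is,
$$P^{U_{r,s}}=\Fp[x^r,v^s]=\Fp[x^r,(y^p-x^{p-1}y)^s],$$
which is manifestly a polynomial algebra.

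I expect the only real obstacle to be step (i): it is the single genuinely modular point of the argument, while the $N\rtimes(C_r\times C_s)$ decomposition and the computation of the remaining invariants are, respectively, formal and classical non-modular invariant theory.
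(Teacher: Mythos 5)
Your proof is correct, but it follows a genuinely different route from the paper's. The paper simply checks that $x^r$ and $v^s=(y^p-x^{p-1}y)^s$ are invariant and then invokes Kemper's Jacobian criterion (Proposition 16 of \cite{Kemper}): since the product of the degrees is $r\cdot ps=\lvert U_{r,s}\rvert$ and the Jacobian determinant $rx^{r-1}\cdot(-sx^{p-1}v^{s-1})$ is nonzero (here $r,s$ are prime to $p$), these two invariants must generate. Your argument instead exploits the semidirect product structure $U_{r,s}=N\rtimes(C_r\times C_s)$ with $N=U_{1,1}$ normal: you compute $P^N=\Fp[x,v]$ by hand (the one genuinely modular step, done correctly via the freeness of $P$ over $\Fp[x,v]$ on $1,y,\dots,y^{p-1}$ and descending induction on the coefficients of $\overline\Delta(f)$ in that basis), and then take invariants of the non-modular diagonal quotient, which acts monomially on $x$ and $v$. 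Both proofs are complete; the paper's is shorter because it outsources the hard part to a general criterion, while yours is self-contained, makes the role of the Sylow $p$-subgroup explicit, and recovers the classical fact $P^{U_{1,1}}=\Fp[x,y^p-x^{p-1}y]$ along the way. Two cosmetic points: your phrase ``comparing the coefficient of the highest power of $y$'' is best read as comparing coefficients in the free basis $1,y,\dots,y^{p-1}$ over $\Fp[x,v]$ (the coefficient of $y^{j}$ there is $(j+1)a_{j+1}$ plus terms in already-eliminated $a_i$), and note that the paper's list of generators of $U_{r,s}$ contains a typo ($\omega$ should be $\omega'$, as the proof of Theorem \ref{grups} confirms); your identification of $N$ as generated by $\omega'$ is the correct reading.
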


\begin{proof}
 It is trivial to check that $x^r$ and $(y^p-x^{p-1}y)^s$ are invariant under the generators of $U_{r,s}$. Since the Jacobian of these two polynomials does not vanish, we can use proposition 16 in \cite{Kemper} to conclude that $$P^{U_{r,s}}=\Fp[x^r,(y^p-x^{p-1}y)^s].$$ 
\end{proof}

%
%
%
%
%
%
%
%

\section{Bases for rings of coinvariants}

In this section we obtain explicit bases of $P$ over $P^G$ for each reflection group $G\subseteq GL_2(\Fp)$. 
We first state some useful identities involving the polynomials $d_1,d_0,\delta$, which will play an important role in 
the computations in this paper. The proofs are easy, either by a direct computation or using induction, and we omit the 
details.

\begin{Prop}\label{formules}Assume $p>2$. Then
 \begin{enumerate}
  \item $d_1=\sum_{i=0}^p\,x^{(p-1)i}y^{(p-1)(p-i)}$.
  \item $y^{p^2-1}=y^{p-1}d_1-d_0$.
  \item $y^{p^2-p+r}=y^rd_1-x^{p-r-1}(y^{p-1}-x^{p-1})^{p-r-1}\delta^r$ for any $0\le r\le p-1$.
  \item $d_1=x^{p^2-p}+y^{p^2-p}-x^{p-1}y^{*}-\delta\sum_{k=1}^{p-2}k\,x^{*}y^{k(p-1)-1}$.
  \item Let $i\ge p$, $j\ge1$ and let $a,b>0$ be integers such that $i=a(p-1)+b$ and $0<b\le p-1$. Then  $$x^iy^j=x^by^{*}+\langle\delta\rangle.$$
  \item Let $0<r<p-1$ and $j\ge r$. Then any monomial $x^iy^j$ can be written as an $\Fp$-linear combination
  $$x^iy^j=\sum_{s=0}^{rp-1}\lambda_s\,x^sy^*+\langle\delta^r\rangle.$$
  \qed
 \end{enumerate}
\end{Prop}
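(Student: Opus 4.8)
The plan is to prove the six identities in Proposition~\ref{formules} by elementary manipulations of the defining formulas for $d_1$, $d_0$, $\delta$, working modulo the ideal $\langle\delta\rangle$ or $\langle\delta^r\rangle$ as needed. None of these should require deep input; the main organizational point is to establish them in the right order, since the later parts lean on the earlier ones.

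First I would handle (1): expand $d_1=\frac1\delta(xy^{p^2}-x^{p^2}y)$ by writing $xy^{p^2}-x^{p^2}y=xy(y^{p^2-1}-x^{p^2-1})$ and factoring out $\delta=xy(y^{p-1}-x^{p-1})$, so that $d_1=(y^{p^2-1}-x^{p^2-1})/(y^{p-1}-x^{p-1})$; setting $u=y^{p-1},v=x^{p-1}$ this is $(u^{p+1}-v^{p+1})/(u-v)=\sum_{i=0}^p u^{p-i}v^i$, which is exactly (1). Part (2) is the $i=0$ case extracted from this same sum, rearranged using $d_0=x^{p-1}y^{p-1}(y^{p-1}-x^{p-1})^{p-1}$... more simply, $y^{p^2-1}=y^{p-1}\cdot d_1-(\text{the }i\ge1\text{ terms})$ and one checks the tail equals $d_0$; alternatively use $d_0=\delta^{p-1}$ directly. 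Part (3) generalizes (2): multiply $d_1$ by $y^r$ and isolate $y^{p^2-p+r}=y^{(p-1)p+r}$; the correction term $x^{p-r-1}(y^{p-1}-x^{p-1})^{p-r-1}\delta^r$ should fall out after factoring, and I would verify it by checking that both sides have the same image under the substitution $u=y^{p-1}$, $v=x^{p-1}$ after dividing through by the appropriate power of $xy$ — this reduces (3) to a polynomial identity in two variables which is routine to confirm (and the stated cases $r=0$ and $r=p-1$ should specialize to (2) and a triviality, giving a useful consistency check). Part (4) is a more delicate re-expansion of $d_1$ collecting the extreme terms $x^{p^2-p}$, $y^{p^2-p}$ and grouping everything else as a multiple of $\delta$; I would expand $d_1=\sum_{i=0}^p x^{(p-1)i}y^{(p-1)(p-i)}$ from (1), peel off $i=0,p$, and show the middle sum $\sum_{i=1}^{p-1}x^{(p-1)i}y^{(p-1)(p-i)}$ equals $x^{p-1}y^*+\delta\sum_{k=1}^{p-2}k\,x^*y^{k(p-1)-1}$ by an induction on consecutive terms, each step trading a factor using $x^{p-1}y^{p-1}$ against $\delta$.

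For (5), given $i=a(p-1)+b$ with $0<b\le p-1$ and $i\ge p$ (so $a\ge1$), I would argue by downward induction on $i$: write $x^iy^j=x^{i-(p-1)}\cdot x^{p-1}y^j$ and use $x^{p-1}y^{p-1}\equiv x^{p-1}\cdot y^{p-1}$ together with $x^{p-1}y\cdot y^{p-1}=x^{p-1}y^p$ and the relation $\delta=xy^p-x^py$, i.e. $x^{p-1}y^p\equiv x^{p-1}\cdot x^{p-1}y \pmod{\langle\delta\rangle}$ after clearing, dropping the exponent of $y$ back toward $0$ while the exponent of $x$ climbs, until the $x$-exponent reaches $b$; since the congruence is modulo $\langle\delta\rangle$ and we only track the $y$-exponent via an asterisk, the bookkeeping is light. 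Part (6) is the analogous statement modulo $\langle\delta^r\rangle$: starting from a monomial $x^iy^j$ with $j\ge r$, I would repeatedly apply the relation coming from $\delta^r$ to push the $x$-exponent below $rp$, the bound $rp-1$ being exactly what the degree of $\delta^r$ in $x$ allows; this is essentially a division-algorithm argument with respect to the monomial $x^{p}y^{*}$-type leading term of $\delta$, iterated $r$ times, and the hypothesis $j\ge r$ guarantees there is always enough $y$ to feed each of the $r$ factors of $\delta$.

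The main obstacle is purely combinatorial: getting the explicit coefficient $\sum_{k=1}^{p-2}k\,x^*y^{k(p-1)-1}$ in (4) exactly right, since this is the one identity with nontrivial structure constants rather than a mod-$\delta$ congruence, and (4) is what pins down the precise form of $d_1$ used later. Everything else is a matter of careful but routine induction, which is why the authors (and I) would be content to state these and omit the details.
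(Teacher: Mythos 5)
The paper itself gives no proof of Proposition~\ref{formules} (``The proofs are easy\ldots and we omit the details''), so you are supplying the computations it alludes to, and for parts (1)--(4) and (6) your sketch does this correctly. The substitution $u=y^{p-1}$, $v=x^{p-1}$ reduces (1)--(3) to $(u^{p+1}-v^{p+1})/(u-v)=\sum_{i}u^{p-i}v^{i}$ together with $(u^{p}-v^{p})/(u-v)=(u-v)^{p-1}$ in characteristic $p$ (whence $d_0=\delta^{p-1}$, as the paper notes); your consistency check on (3) works, except that it is $r=p-1$, not $r=0$, that recovers (2). For (4), peeling off the $i=0,p$ terms of (1) and writing $\delta\,k\,x^{*}y^{k(p-1)-1}=k\bigl(v^{p-1-k}u^{k+1}-v^{p-k}u^{k}\bigr)$ makes the sum telescope to $-\sum_{m=1}^{p-2}v^{p-m}u^{m}-2vu^{p-1}$, which matches the middle sum of (1) after adding back $-x^{p-1}y^{*}=-vu^{p-1}$; note the sign, though: the middle sum equals $-x^{p-1}y^{*}-\delta\sum_k\cdots$, not $+$ as your sketch states. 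Part (6), as a division-algorithm argument against the leading term $x^{rp}y^{r}$ of $\delta^{r}$ with $j\ge r$ guaranteeing applicability and the $x$-exponent strictly decreasing, is right.

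The one genuine error is in (5). The relation you need is $x^{p}y\equiv xy^{p}\pmod{\langle\delta\rangle}$, which for $i\ge p$, $j\ge1$ gives $x^{i}y^{j}=x^{i-p}y^{j-1}\cdot x^{p}y\equiv x^{i-(p-1)}y^{j+(p-1)}$: the $x$-exponent \emph{drops} by $p-1$ while the $y$-exponent rises, and after $a$ steps one lands on $x^{b}y^{j+a(p-1)}$ (each intermediate $x$-exponent $(a-k)(p-1)+b$ is still $\ge p$ while $a-k\ge1$, so the step remains applicable). Your sketch instead has the $y$-exponent dropping toward $0$ while the $x$-exponent climbs ``until it reaches $b$''; since $b\le p-1<p\le i$, climbing moves away from $b$, and the step in that direction requires $j\ge p$, which is not assumed (only $j\ge1$). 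As written the reduction cannot terminate at $x^{b}$; reversing the direction of the exchange fixes it completely and makes the downward induction on $i$ you announced go through.
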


Let $r|p-1$. Define
$$\Omega^r:=\left\{x^iy^j\Bigg\vert\begin{aligned}&0\le i\le rp-1\\&0\le j\le p^2-p+r-1\end{aligned}\right\}\cup\left\{x^iy^j\Bigg\vert\begin{aligned}&rp\le i\le p^2-p-1\\&0\le j\le r-1\end{aligned}\right\}.$$
Notice that $\lvert\Omega^r\rvert=rp(p^2-1)=\lvert L^r\rvert$. 
As suggested by the definition above, it is useful to imagine $\Omega^r$ as consisting of two blocks: a first block which contains all monomials which divide $x^{rp-1}y^{p^2-p+r-1}$ and a second block with all monomials $x^iy^j$ for $rp\le i\le p^2-p-1$, $0\le j\le r-1$. This second block only appears if $r<p-1$.

\begin{Prop}\label{baseL}
 $\Omega^r$ is a $P^{L^r}$-basis for $P$.
\end{Prop}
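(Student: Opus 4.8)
The plan is to show that $\Omega^r$ spans $P$ over $P^{L^r}$ and then invoke a counting argument to conclude it is a basis. Since $P^{L^r}=\Fp[d_1,\delta^r]$ is a polynomial algebra, $P$ is a free module over it of rank $|L^r|=rp(p^2-1)$ (this is the standard fact for reflection groups with polynomial invariants, recalled in Section~2). Because $|\Omega^r|=rp(p^2-1)$ already matches this rank, it suffices to prove that $\Omega^r$ \emph{generates} $P$ as a $P^{L^r}$-module; a generating set of the right cardinality for a free module of that rank over a (connected graded) polynomial ring is automatically a basis, by a graded Nakayama / dimension-count argument degree by degree.

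To prove spanning, I would argue that every monomial $x^iy^j$ can be rewritten, modulo the ideal generated by $d_1$ and $\delta^r$, as an $\Fp$-linear combination of monomials in $\Omega^r$. The key is to reduce exponents using the identities in Proposition~\ref{formules}. First, using part (5) (and more generally part (6) with the power $\delta^r$), any monomial with $x$-exponent $\ge rp$ can be pushed down: modulo $\langle\delta^r\rangle$ one trades high powers of $x$ for lower ones, so one may assume $i\le rp-1$ at the cost of also controlling the $y$-exponent. This handles the first block. For the $y$-direction, part (3) of Proposition~\ref{formules} expresses $y^{p^2-p+r}$ in terms of $y^r d_1$ and a term divisible by $\delta^r$ (note $\delta^r$ divides that term since $r\le p-1$), which lets one reduce any $y$-exponent $\ge p^2-p+r$ modulo $\langle d_1,\delta^r\rangle$; iterating brings $j$ below $p^2-p+r$. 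The second block (monomials with $rp\le i\le p^2-p-1$ but $0\le j\le r-1$) arises precisely because, when $j<r$, one \emph{cannot} apply the $\delta^r$-reduction of part (6) to lower $i$, so those monomials must be retained; one checks that the $x$-reduction via part (5)/(1) (which only uses $\delta$, not $\delta^r$) still terminates at $i\le p^2-p-1$ once $y^{p^2-p}$ itself is rewritten using part (4) or (1).

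The main obstacle I anticipate is bookkeeping the interaction between the two reductions: lowering the $x$-exponent can raise the $y$-exponent and vice versa, so one needs a well-chosen termination measure (for instance, a lexicographic order on $(i,j)$ or on a suitable weighted combination) to guarantee the rewriting process halts and lands inside $\Omega^r$ rather than cycling. Concretely, I would first reduce $j$ below $p^2-p+r$ using part (3) repeatedly (this only decreases $j$ and may introduce $x$-powers that are then handled separately), then reduce $i$: if $j\ge r$ use part (6) to force $i\le rp-1$, landing in block one; if $j<r$ use the $\delta$-only identities of parts (1)--(5) to force $i\le p^2-p-1$, landing in block two. Verifying that no step re-inflates an already-reduced exponent past its bound — e.g.\ that applying part (3) to a monomial in block two keeps it in $\Omega^r$ — is the delicate point, but it follows from a careful reading of the degree shifts in each identity. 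Once spanning is established, the cardinality match closes the proof.
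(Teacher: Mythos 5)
Your overall frame (spanning plus the cardinality/Nakayama count, since $P$ is free of rank $\lvert L^r\rvert=\lvert\Omega^r\rvert$ over the polynomial ring $P^{L^r}$) is correct and is exactly what the paper does implicitly. The gap is in the spanning step. Your rewriting procedure relies on Proposition~\ref{formules}(1)--(5) to lower the $x$-exponent of monomials $x^iy^j$ with $j<r$, but those identities are only congruences modulo $\langle\delta\rangle$, and for $r>1$ one has $\delta\notin\langle d_1,\delta^r\rangle=J_1(L^r)$. So a reduction such as $x^iy^j=x^by^{*}+\delta h$ does not show that $x^iy^j$ lies in $P^{L^r}\langle\Omega^r\rangle$: the error term $\delta h$ is not an invariant multiple of anything, and expanding it just produces new monomials to which the same objection applies, with no termination measure in sight. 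Concretely, for $p=5$, $r=2$ the monomial $x^{20}y$ has $j=1<r$, so \ref{formules}(6) is unavailable, and \ref{formules}(5) leaves you with a $\delta$-divisible remainder that your toolkit cannot absorb. The pure powers $x^n$ with $n\ge p^2-p$ are a second instance of the same problem: neither (5) nor (6) applies when $j=0$, and (4) again introduces a $\delta$-term.

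The paper avoids this entire regime by a different device: induct on the degree $n$ and write $f=\lambda\,d_1^a x^b+yf'$ with $0\le b<p^2-p$, so that the unique monomial not divisible by $y$ is absorbed into $d_1^a x^b\in P^{L^r}\langle\Omega^r\rangle$, and every other monomial encountered is $y\alpha$ for some $\alpha\in\Omega^r$ --- hence has $x$-exponent at most $p^2-p-1$ automatically. Only two boundary cases remain ($y\alpha$ leaving $\Omega^r$ through the top of the first block, handled by \ref{formules}(3), or through the top of the second block, handled by \ref{formules}(6), both of which produce errors in $\langle\delta^r\rangle$ or $d_1\cdot(\text{lower degree})$). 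To repair your argument you would either need to adopt this $y$-factoring induction, or prove new reduction identities, valid modulo $\langle d_1,\delta^r\rangle$ rather than $\langle\delta\rangle$, for the monomials $x^iy^j$ with $j<r$ and $i\ge p^2-p$.
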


\begin{proof}
 It is enough to prove that the monomials in $\Omega^r$ generate $P$ as a $P^{L^r}$-module. We prove this by induction on the degree. Let $f$ be a polynomial of degree $n$ and write $n=a(p^2-p)+b$ with $a,b\ge0$ and $0\le b<p^2-p$. Then it is clear that we can write $f=\lambda\, d_1^ax^b+yf'$ for some $\lambda\in\Fp$, $f'\in P$. By induction, we can write $f'=\sum h_i\alpha_i$ for some $h_i\in P^{L^r}$, $\alpha_i\in\Omega^r$. It is clear that for any $\alpha\in\Omega^r$ we have $y\alpha\in\Omega^r$, except in the following two cases:
 \begin{enumerate}
  \item $\alpha=x^iy^{p^2-p+r-1}$, $0\le i\le rp-1$.
  \par\noindent
  In this case, formula (3) in \ref{formules} tells us that $y\alpha=x^iy^{p^2-p+r}=d_1x^iy^r-\delta^rh$ for some $h\in P$. By induction, $h$ is in $P^{L^r}\langle\Omega^r\rangle$ and so is $y\alpha$.
  \item $\alpha=x^iy^{r-1}$, $r<p-1$, $rp\le i\le p^2-p-1$.\par\noindent
  In this case, we use \ref{formules}(6) to write the monomial $x^iy^r$ as 
  $$x^iy^r=\sum_{s=0}^{rp-1}\lambda_s\,x^sy^*+\delta^rh.$$ Then, we can apply induction on $h$ to finish the proof.
 \end{enumerate}
\end{proof}

The case $r=p-1$ of this result was known since the work of Campbell et al.\ (\cite{Campbell}) who computed a basis for 
the coinvariants of the group $GL_n(\Fp)$ for any $n$.

The case of the reducible groups $U_{r,s}$ is similar but easier.

\begin{Prop}
 For any $r,s|p-1$ the monomials $$\Gamma_{r,s}:=\{x^iy^j\,|\, 0\le i\le r-1,\, 0\le j\le ps-1\}$$ form a basis of $P$ over $P^{U_{r,s}}$.
\end{Prop}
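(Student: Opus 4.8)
The plan is to mimic the structure of the proof of Proposition~\ref{baseL}, but in the easier reducible setting. By Proposition~\ref{invariantsU} we know $P^{U_{r,s}}=\Fp[x^r,(y^p-x^{p-1}y)^s]$, with the two invariant generators in degrees $r$ and $ps$; call them $u:=x^r$ and $v:=(y^p-x^{p-1}y)^s$. Since $\lvert\Gamma_{r,s}\rvert=rps=\lvert U_{r,s}\rvert$ and $P$ is a free $P^{U_{r,s}}$-module of rank $\lvert U_{r,s}\rvert$ (the ring of invariants being polynomial), it suffices to prove that the monomials in $\Gamma_{r,s}$ generate $P$ as a $P^{U_{r,s}}$-module; the count then forces them to be a basis. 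I would do this by induction on the degree of a homogeneous polynomial $f\in P$.

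First I would set up the induction. Given $f$ homogeneous of degree $n$, I want to subtract off multiples of $u=x^r$ and $v=(y^p-x^{p-1}y)^s$ to reduce either to lower degree or to an $\Fp$-combination of elements of $\Gamma_{r,s}$. The key reduction has two independent directions. In the $x$-direction: if a monomial $x^iy^j$ has $i\ge r$, write $x^iy^j=u\cdot x^{i-r}y^j$, which is $x^r$ times a polynomial of strictly smaller degree, so induction applies; thus we may assume every monomial of $f$ has $0\le i\le r-1$. In the $y$-direction: note $v=(y^p-x^{p-1}y)^s=y^{ps}+\langle x\rangle\cdot(\text{stuff})$, more precisely $v\equiv y^{ps}\pmod{x}$; so if $x^iy^j$ has $0\le i\le r-1$ and $j\ge ps$, then $x^iy^{j-ps}\cdot v=x^iy^j+(\text{monomials with higher }x\text{-exponent and lower }y\text{-exponent, same total degree})$. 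Subtracting $x^iy^{j-ps}\cdot v$ from $f$ replaces the offending monomial by monomials which, after the $x$-reduction above, are handled by induction on degree (they have strictly smaller degree once we pull out the factor $x^r$), or one iterates the $x$-reduction finitely often since each application strictly decreases degree.

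The one subtlety to be careful about — and the step I expect to be the main (minor) obstacle — is making the two reductions cohere so that the induction actually terminates. The clean way is a double induction or a single induction on degree with an inner finite reduction: given $f$ of degree $n$, first apply the $x$-reduction to write $f=x^r g+(\text{part with all }x\text{-exponents}<r)$, handle $x^r g$ by the degree induction, and then for the remaining part, which has all $x$-exponents in $[0,r-1]$, peel off the leading $y$-powers using $v$; each subtraction of $x^iy^{j-ps}v$ with $j\ge ps$ produces error terms of the same total degree but with $x$-exponent $\ge r$ (coming from the $x^{p-1}y$ cross-terms in $(y^p-x^{p-1}y)^s$), so those errors feed back into the $x$-reduction and hence drop in degree. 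A cleanest bookkeeping is: induct on $n$, and within fixed $n$ induct downward on the maximal $y$-exponent appearing; this makes termination transparent. Once $f$ is reduced to an $\Fp$-linear combination of monomials $x^iy^j$ with $0\le i\le r-1$ and $0\le j\le ps-1$, it lies in the span of $\Gamma_{r,s}$, completing the induction.

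Finally I would remark that the rank count closes the argument: we have exhibited a surjection of free $P^{U_{r,s}}$-modules onto $P$ from the free module on $\Gamma_{r,s}$, and since $\lvert\Gamma_{r,s}\rvert=rps=\lvert U_{r,s}\rvert$ equals the rank of $P$ over $P^{U_{r,s}}$, the map is an isomorphism, so $\Gamma_{r,s}$ is a basis.
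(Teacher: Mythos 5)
Your argument is correct and is essentially the paper's: both reduce the claim to showing that $\Gamma_{r,s}$ generates $P$ as a $P^{U_{r,s}}$-module (the count $\lvert\Gamma_{r,s}\rvert=rps=\lvert U_{r,s}\rvert$ then forces a basis, since $P$ is free of that rank over the polynomial ring of invariants) and prove generation by induction on degree using the explicit invariant generators $x^r$ and $\rho=(y^p-x^{p-1}y)^s\equiv y^{ps}\pmod{x}$. The only difference is bookkeeping: the paper peels off a single factor of $x$ at each step, writing $f=\lambda\rho^ay^b+xh$ with $0\le b<ps$ and inducting on $h$, whereas you reduce the $x$- and $y$-exponents directly and justify termination by noting that the cross-terms of $\rho$ have $x$-exponent at least $p-1\ge r$; both are valid.
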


\begin{proof}
 Again, it is enough to show that these monomials generate $P$ over $P^{U_{r,s}}$. Recall from \ref{invariantsU} that 
the ring of invariants of $U_{r,s}$ is generated by $x^r$ and $\rho=(y^p-x^{p-1}y)^s$. If $f$ is any polynomial, we can 
write $f=\lambda\rho^ay^b+xh$ for some $a\ge0$, $0\le b<ps$, $h\in P$. By induction on the degree, the polynomial 
$h$ is in the span of $\Gamma_{r,s}$. Then, for any $\alpha\in\Gamma_{r,s}$ we have $x\alpha\in\Gamma_{r,s}$, except for 
the case of $\alpha=x^{r-1}y^j$ for $0\le j<ps$, but in this case $x\alpha=x^ry^j\in 
P^{U_{r,s}}\langle\Gamma_{r,s}\rangle$.
\end{proof}

In all these cases, the ring of coinvariants is a Poincar{\'e} duality algebra. The fundamental class $\mu$ is 
$x^{rp-1}y^{p^2-p+r-1}$ for the group $L^r$ and $x^{r-1}y^{ps-1}$ for the group $U_{r,s}$.

%
%
%
%
%
%
%
%

\section{Ideals of stable invariants}
In this section we compute the sequence of ideals of stable invariants of the reflection groups in $GL_2(\Fp)$. We start with the harder case of the groups $L^r$. Fist, we need to compute the invariants of $L^r$ acting on the coinvariant algebra $P_{L^r}$. The following lemma on binomial coefficients modulo $p$ will be used in the proofs in this section.

\begin{Lem}\label{lemabinomial}
Let $0\le i<p$, $1\le k<p$. Then $$\sum_{t=0}^{k-1}\binom{k(p-1)}{i+t(p-1)}\equiv(-1)^i\mod p.$$ 
\end{Lem}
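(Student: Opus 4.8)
The identity to prove is
$$\sum_{t=0}^{k-1}\binom{k(p-1)}{i+t(p-1)}\equiv(-1)^i\pmod p,\qquad 0\le i<p,\ 1\le k<p.$$
The natural tool is a generating-function / roots-of-unity argument carried out in $\Fp$ itself, exploiting that $(1+X)^{k(p-1)}\equiv(1+X)^{-k}\pmod{p}$ as a formal power series with $\Fp$ coefficients (note $1+X$ is invertible in $\Fp[[X]]$). The left-hand side is the sum of those coefficients of $(1+X)^{k(p-1)}$ whose exponent is $\equiv i\pmod{p-1}$. To extract a residue class of exponents modulo $p-1$ one sums over $(p-1)$-st roots of unity — and $\Fp^{*}$ is exactly the group of $(p-1)$-st roots of unity in $\Fp$. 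So the plan is: for $\zeta$ a generator of $\Fp^{*}$,
$$\sum_{t=0}^{k-1}\binom{k(p-1)}{i+t(p-1)}=\sum_{\substack{0\le m\le k(p-1)\\ m\equiv i\,(p-1)}}\binom{k(p-1)}{m}\equiv\frac1{p-1}\sum_{a=0}^{p-2}\zeta^{-ai}(1+\zeta^a)^{k(p-1)}\pmod p,$$
using the standard orthogonality $\sum_{a=0}^{p-2}\zeta^{a(m-i)}=0$ unless $m\equiv i$, in which case it is $p-1\equiv-1$. One must check the reduction of the binomial coefficients with exponents in $[0,k(p-1)]$ is legitimate termwise; since $0\le i+t(p-1)\le k(p-1)<p^2$ this is just Lucas, and actually nothing beyond "$\binom{k(p-1)}{m}$ makes sense mod $p$" is needed once we work with the polynomial $(1+X)^{k(p-1)}\in\Fp[X]$.

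The next step evaluates the character sum. For $a\ne 0$ we have $(1+\zeta^a)^{k(p-1)}=\bigl((1+\zeta^a)^{p-1}\bigr)^{k}$; and $(1+\zeta^a)^{p-1}=1$ whenever $1+\zeta^a\ne 0$, i.e. whenever $\zeta^a\ne -1$. Since $-1=\zeta^{(p-1)/2}$ (for $p>2$; the case $p=2$ is trivial and excluded by the surrounding hypothesis $p>2$), exactly one value $a=a_0:=(p-1)/2$ is exceptional, and there $1+\zeta^{a_0}=0$ so the term $(1+\zeta^{a_0})^{k(p-1)}=0$ (here $k(p-1)\ge 1$). Therefore
$$\sum_{a=0}^{p-2}\zeta^{-ai}(1+\zeta^a)^{k(p-1)}=\underbrace{(1+1)^{k(p-1)}}_{a=0}\;+\;\sum_{\substack{a\ne 0\\ a\ne a_0}}\zeta^{-ai}\cdot 1\;+\;0.$$
Now $(1+1)^{k(p-1)}=2^{k(p-1)}=(2^{p-1})^{k}=1$ by Fermat, and the remaining sum is $\sum_{a=0}^{p-2}\zeta^{-ai}-\zeta^{0}-\zeta^{-a_0 i}=\bigl(0\ \text{or}\ {-1}\bigr)-1-(-1)^i$, where $\sum_{a}\zeta^{-ai}$ is $0$ if $(p-1)\nmid i$ and $-1$ if $(p-1)\mid i$, and $\zeta^{-a_0 i}=(-1)^{-i}=(-1)^i$.

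**Assembling and the main obstacle.** Putting the pieces together, the whole character sum equals $1+\bigl[\,0\text{ or }{-1}\,\bigr]-1-(-1)^i$, and dividing by $p-1\equiv -1$ gives $(-1)^i$ precisely when $(p-1)\nmid i$, i.e. for $0<i<p-1$; the boundary cases $i=0$ and $i=p-1$ need the extra $-1$ from $\sum_a\zeta^{-ai}$ and must be checked to still yield $(-1)^i=1$ — a short separate bookkeeping step. The main obstacle is thus not the core computation but this case analysis at the edges $i\in\{0,p-1\}$ together with making the "reduce mod $p$ termwise then interpret via $\Fp$-roots of unity" step rigorous; the cleanest route is to never leave $\Fp$ at all — define everything as an identity of elements of $\Fp$ obtained by evaluating the polynomial $(1+X)^{k(p-1)}\in\Fp[X]$ at the $(p-1)$-st roots of unity — which sidesteps any lifting issue. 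An alternative, possibly shorter, proof avoiding roots of unity entirely would induct on $k$: the case $k=1$ reads $\binom{p-1}{i}\equiv(-1)^i$ (Lucas), and passing from $k$ to $k+1$ uses Vandermonde $\binom{(k+1)(p-1)}{m}=\sum_{j}\binom{k(p-1)}{j}\binom{p-1}{m-j}$ summed over $m\equiv i$; I would keep the generating-function argument as the primary proof since it is the most transparent, and mention the inductive one only if space allows.
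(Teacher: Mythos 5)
Your proof is correct in outline but takes a genuinely different route from the paper's. The paper works digit by digit: it writes $k(p-1)=(k-1)p+(p-k)$ and $i+t(p-1)$ in base $p$ (splitting the sum at $t\le i$ versus $t>i$), applies Lucas to each term, observes that the terms with $t>i$ vanish because $p+i-t>p-k$, and sums the surviving products $\binom{k-1}{t}\binom{p-k}{i-t}$ by Vandermonde to get $\binom{p-1}{i}\equiv(-1)^i$. Your roots-of-unity filter over $\Fp^{*}$ replaces all of that by Fermat's little theorem --- every nonzero $1+\zeta^a$ raised to the power $k(p-1)$ equals $1$ --- which is conceptually cleaner and avoids digit bookkeeping entirely. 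What the paper's argument buys in exchange is uniformity in $i$: your filter forces a separate treatment of $i\in\{0,p-1\}$, which is exactly where your write-up has its one real defect.

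Concretely: your first displayed equality, identifying the lemma's sum with $\sum_{m\equiv i\,(p-1),\,0\le m\le k(p-1)}\binom{k(p-1)}{m}$, is false precisely when $i=0$ or $i=p-1$. In those cases the full residue class has $k+1$ members and the lemma's sum omits exactly one of them ($m=k(p-1)$ when $i=0$, $m=0$ when $i=p-1$), each contributing $1$. You attribute the needed correction to ``the extra $-1$ from $\sum_a\zeta^{-ai}$'', but that is a second, independent effect: the extra $-1$ in the character sum makes the \emph{full-class} sum equal to $2$ rather than $1$ (check $p=3$, $k=2$, $i=0$: $\binom40+\binom42+\binom44=8\equiv2$, while the lemma's sum is $\binom40+\binom42=7\equiv1$), and it is the omitted binomial term that brings the lemma's sum back down to $2-1=1=(-1)^i$. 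Both effects are needed and they do combine to give the right answer, so the proof closes; but as written the boundary bookkeeping is misdiagnosed and must be redone along these lines before the argument is complete.
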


\begin{proof}
 We need a well known fact on the computation of binomial coefficients modulo a prime (see \cite{Steenrod}, lemma 2.6). 
If $a=\sum a_ip^i$, $b=\sum b_ip^i$ are the base $p$ series for the integers $a$ and $b$, then $$\binom 
ab\equiv\prod\binom{a_i}{b_i}\mod p.$$ Using this fact we can proceed as follows. If $i<k$ then
 $$\begin{aligned}
  \sum_{t=0}^{k-1}\binom{k(p-1)}{i+t(p-1)}&=\sum_{t=0}^{i}\binom{k(p-1)}{i+t(p-1)}+
 \sum_{t=i+1}^{k-1}\binom{k(p-1)}{i+t(p-1)}\\
 &\equiv\sum_{t=0}^{i}\binom{k-1}{t}\binom{p-k}{i-t}+\sum_{t=i+1}^{k-1}\binom{k-1}{t-1}\binom{p-k}{p+i-t}.
  \end{aligned}
  $$
  The first term is equal to $\binom{p-1}i\equiv(-1)^i\,(p)$ and the second term vanishes because $p+i-t>p-k$. Notice that the same argument holds for $i\ge k$.
\end{proof}

\begin{Thm}\label{calculinvest}
 Let $A$ be the elements of positive degree in the coinvariant algebra $P_{L^r}$ and let $A^{L^r}$ denote the $\Fp$-vector space of $L^r$-invariant elements of $A$. Then
 \begin{enumerate}
  \item If $r>1$, then $A^{L^r}=0$.
  \item If $r=1$, then $A^{L^r}$ is generated by the following elements:
  \begin{enumerate}
   \item The fundamental class $\mu=x^{p-1}y^{p^2-p}$.
   \item The elements $\gamma_i$ for $i=2,\ldots,p-1$.
  \end{enumerate}
 \end{enumerate}
\end{Thm}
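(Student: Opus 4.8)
The plan is to work directly with the explicit $P^{L^r}$-basis $\Omega^r$ of $P$ from Proposition \ref{baseL}, which gives a monomial basis of the coinvariant algebra $P_{L^r}$, and then ask which $\Fp$-linear combinations of these monomials are fixed by the generating reflections $\omega$, $\omega'$, and $D(\zeta^{(p-1)/r},1)$. Since $L^r$ is generated by these three reflections, an element of $P_{L^r}$ is $L^r$-invariant iff it is fixed by all three. Working degree by degree is natural here because each graded piece of $P_{L^r}$ is finite-dimensional with the explicit monomial basis $\Omega^r$ in that degree.

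First I would dispose of the diagonal reflection $D(\zeta^{(p-1)/r},1)$: it acts on $x^iy^j$ by the scalar $\zeta^{i(p-1)/r}$, so an invariant must be supported on monomials with $i\equiv 0\pmod r$. When $r>1$ this already kills a large block; combined with the action of $\omega$ and $\omega'$ I expect to force everything to zero, giving part (1). More precisely, for $r>1$ I would argue that the constraint ``$r\mid i$'' together with invariance under $\omega$ (which sends $x\mapsto x$, $y\mapsto x+y$, i.e. $\Delta$-closedness up to the relevant identities) and under $\omega'$ (the symmetric operator $\overline\Delta$) leaves no nonzero class in positive degree — the point being that the fundamental class $\mu=x^{rp-1}y^{p^2-p+r-1}$ and its would-be partners have $x$-exponent $rp-1\not\equiv 0\pmod r$, and one propagates this obstruction down through the Poincaré duality pairing.

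For $r=1$ (the group $SL_2(\Fp)$) the diagonal constraint is vacuous, so the real work is invariance under $\omega$ and $\omega'$. The top class $\mu=x^{p-1}y^{p^2-p}$ is automatically invariant (it spans the top piece of a Poincaré duality algebra on which $SL_2$ acts by determinant $=1$). For the $\gamma_i$, $2\le i\le p-1$, which live in degree $i(p-1)$, I would verify directly that $\Delta(\gamma_i)$ and $\overline\Delta(\gamma_i)$ lie in the ideal $J_1(SL_2(\Fp))=(d_1,\delta)$, hence vanish in $P_{L^1}$; symmetry in $x,y$ handles $\overline\Delta$ once $\Delta$ is done, and $\gamma_i=x^{i(p-1)}-x^{p-1}y^{(i-1)(p-1)}+y^{i(p-1)}$ makes $\Delta(\gamma_i)$ a short explicit computation where Lemma \ref{lemabinomial} supplies the needed congruences. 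The harder direction — showing these are \emph{all} the invariants — is where I expect the main obstacle: one must show that in each positive degree the space of $\omega$- and $\omega'$-invariant classes in $P_{L^1}$ has dimension exactly $1$ if the degree is $i(p-1)$ for some $2\le i\le p-1$ or equals $p^2-1$ (the top degree), and $0$ otherwise.

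To handle that, I would set up the problem as a linear-algebra count using the $\Omega^1$-basis: an invariant class is a vector $f=\sum\lambda_{ij}x^iy^j$ with $x^iy^j\in\Omega^1$, and $\Delta f\equiv 0$, $\overline\Delta f\equiv 0$ in $P_{L^1}$ translate, via the reduction identities in Proposition \ref{formules} (especially parts (2),(3),(5)), into a linear system on the $\lambda_{ij}$. The binomial identity of Lemma \ref{lemabinomial} is precisely what is needed to evaluate the coefficients that arise when expanding $\omega(x^iy^j)-x^iy^j=\sum_t\binom{j}{t}x^{i+t}y^{j-t}$ and then reducing powers $y^{j-t}$ modulo the relations. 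I would organize the count by the residue of the degree $n$ modulo $p-1$ and by the ``block'' of $\Omega^1$ (here $r=1$ so there is no second block, which simplifies matters), pin down that the kernel of the combined operator is spanned by $\mu$ and the $\gamma_i$, and thereby conclude. The anticipated difficulty is purely bookkeeping: keeping the two twisted-derivation conditions and the Poincaré-duality reductions consistent across all degrees simultaneously, rather than any single deep step.
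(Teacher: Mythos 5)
Your overall strategy (the monomial basis $\Omega^r$, sparseness from the diagonal elements, then a linear system coming from invariance under $\omega$ and $\omega'$) is the same as the paper's, and the verification that $\mu$ and the $\gamma_i$ are invariant is fine. But there are two genuine gaps in the part that does the real work.

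First, your argument for $r>1$ does not hold up. That the fundamental class $x^{rp-1}y^{p^2-p+r-1}$ fails the congruence $r\mid i$ rules out an invariant in the \emph{top} degree only; there is no way to ``propagate this obstruction down through the Poincar\'e duality pairing'' to lower degrees. (The pairing is only equivariant up to the determinant character, and in general a Poincar\'e duality algebra with non-invariant fundamental class can perfectly well have invariants in intermediate degrees.) To prove $A^{L^r}=0$ for $r>1$ you still have to run the degree-by-degree linear algebra: the paper shows that $\omega$-invariance forces any candidate supported on the first block to be a multiple of $y^n$, rules out $y^n$ using the element $\left(\begin{smallmatrix}0&1\\-1&0\end{smallmatrix}\right)$ together with \ref{formules}(4), and separately kills the second-block candidates $x^n$ by showing the term $nx^{n-1}y$ in $\omega x^n$ cannot cancel (via \ref{formules}(6)), forcing $p\mid n$, incompatible with $(p-1)\mid n$ and $n<p^2-p$. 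None of this is replaced by your Poincar\'e duality remark.

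Second, and more seriously for part (2): your claim that ``here $r=1$ so there is no second block'' is false. The second block of $\Omega^r$ is empty only when $r=p-1$; for $r=1$ it is $\{x^i\mid p\le i\le p^2-p-1\}$, and it is precisely where the leading monomials $x^{i(p-1)}$ of the invariants $\gamma_i$ live ($2\le i\le p-1$ gives $p\le 2p-2\le i(p-1)\le p^2-2p+1\le p^2-p-1$). If you set up the linear-algebra count only over the first block, you will conclude (as the paper does for that sub-case) that the only first-block invariants are multiples of $\mu$, and you will never see the $\gamma_i$ at all. The essential computation you are missing is the expansion of $\omega x^{k(p-1)}$ modulo $\langle d_1,\delta\rangle$ after reducing high powers of $x$ into the basis via \ref{formules}(5); this is exactly where Lemma \ref{lemabinomial} is used, yielding $\omega x^{k(p-1)}=x^{k(p-1)}+y^{k(p-1)}+\sum_{i=1}^{p-2}(-1)^ix^iy^{k(p-1)-i}$ in the coinvariants and hence forcing the invariant to be $\gamma_k$. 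Without confronting the interaction between the two blocks, the completeness half of part (2) does not go through.
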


\begin{proof}
The case of $p=2$ can be easily worked out directly (see example 1.10 in \cite{Larry-Ring}), so we assume that $p>2$. 
In proposition \ref{baseL} we have found an $\Fp$-basis $\Omega^r$ for $P_{L^r}$ so an element in $P_{L^r}$ of degree $n$ can be uniquely written as $$f=\sum_{x^iy^j\in\Omega^r}^{i+j=n}\lambda_{ij}\,x^iy^j.$$ The matrices $D(\zeta^{\frac{p-1}r},1)$, $D(1,\zeta^{\frac{p-1}r})$, $D(\zeta,\zeta^{-1})\in L^r$ 
act diagonally on the elements of the basis $\Omega^r$. Hence, if $f$ is invariant then
the coefficients $\lambda_{ij}$ must vanish except when $i,j\equiv0\,(r)$ and $2i\equiv n\,(p-1)$. Lets call this 
``sparseness''.

Recall that we are considering the basis $\Omega^r$ as formed by two blocks. Then, 
we can distinguish two cases for an hypothetical invariant $f$, depending if $f$ contains monomials from both of the 
blocks (this is only possible if $r<p-1$) or it only contains monomials from the first block. 
Let us consider first the case in which there is no monomial from the second block of $\Omega^r$. Then,
$$f=\sum_{i=k_0}^{rp-1}\lambda_ix^iy^{n-i}$$
where $k_0=\max\{0,n-p^2+p-r+1\}$. In the extreme case $k_0=rp-1$, $f$ is a multiple of 
the fundamental class $\mu=x^{rp-1}y^{p^2-p+r-1}$ which can 
only be an invariant if $r=1$. In this case, this fundamentals class $\mu$ of the Poincar{\'e} duality algebra 
of coinvariants of $L^1$ is indeed invariant under the action of $L^1$ because any element $g\in L^1$ must 
transform the fundamental class $\mu$ into some multiple of itself $\chi(g)\mu$, where $\chi$ is a character of $L^1$. 
But $L^1=SL_2(\Fp)$ has no non trivial characters.  

Assume $k_0<rp-1$. Using \ref{formules}(3) we see that $y^k=0$ in $A$ whenever $k\ge p^2-p+r$. If we assume that $f$ is 
invariant under $\omega$, we get $$f\!=\!\omega f\!=\!\sum_{i=k_0}^{rp-1}\sum_{j=0}^i\binom 
ij\lambda_i\,x^jy^{n-j}\equiv 
\sum_{i=k_0}^{rp-1}\sum_{j=k_0}^i\binom ij\lambda_i\,x^jy^{n-j}=
\sum_{j=k_0}^{rp-1}\left(\sum_{i=j}^{rp-1}\binom ij\lambda_i\right)x^jy^{n-j}.$$ Since all monomials here belong to the 
basis $\Omega^r$, their coefficients must agree with the coefficients in $f$ and we get a system of linear equations for 
the coefficients $\lambda_i$: $$\sum_{j=i+1}^{rp-1}\binom ji\lambda_j=0,\quad i=k_0,\ldots,rp-2.$$  The last equation 
gives $\lambda_{rp-1}=0$ and we can prove recursively that $\lambda_i=0$ for $i>0$ by noticing that 
$i\equiv0\,(p)$ and $\lambda_i\neq0$ imply $i\equiv0\,(pr)$ which is not possible.

Hence, $f$ must be a multiple of $y^n$. 
If $y^n$ were invariant under $\psMatrix01{-1}0$ then $$y^n-x^n\in\langle d_1,\delta^r\rangle\subset\langle 
d_1,\delta\rangle.$$ If $n<p^2-p=\lvert d_1\rvert$, then this is not possible because $\delta$ is divisible by $x$. 
Assume $n\ge p^2-p$. This implies $n=p^2-p$. Using \ref{formules}(4)  we deduce 
$$2y^{p^2-p}-x^{p-1}y^{p^2-2p+1}\in\langle d_1,\delta\rangle$$ which is not possible since these two monomials are in 
$\Omega^1$ which is a basis of $P$ over $P^{L^1}=\Fp[d_1,\delta]$. 

The conclusion so far is that the only invariants in $A$ which do not involve monomials from the second block of 
$\Omega^r$ are the scalar multiples of $\mu=x^{p-1}y^{p^2-p}$ for $r=1$. Let us investigate now under which conditions 
a polynomial involving monomials from the second block of $\Omega^r$ can 
be $L^r$-invariant. By sparseness, the only monomial from the second block that can appear is $x^n$ for 
$n\equiv0\,(p-1)$. Assume that  $f=\sum_{i=0}^{rp-1}\lambda_i\,x^iy^{n-i}+x^n$ is $L^r$-invariant with $rp\le n\le 
p^2-p-1$. We must have $r<p-1$ and $n\equiv0\,(p-1)$. Hence, $n>rp$. Consider the invariance of $f$ under $\omega$. We 
have $\omega x^n=x^n+nx^{n-1}y+\cdots$ and if $r>1$, then \ref{formules}(6) shows that the term $x^{n-1}y$ cannot cancel 
with any other term. This implies that $n\equiv0\,(p)$ which is not possible because $n\equiv0\,(p-1)$ and $n<p^2-p$. 

Hence, from now on we only need to investigate the case of $r=1$. Assume there is an invariant of the form  
$$f=\sum_{i=0}^{p-1}\lambda_i\,x^iy^{n-i}+x^n\text{, for }p\le n<p^2-p.$$ 
By sparseness, we know that $n\equiv0\,(p-1)$ and 
$\lambda_i=0$ except whenever $2i\equiv0\,(p-1)$. Hence, 
$$f=\lambda_0\,y^n+\lambda_1\,x^{\frac{p-1}2}y^{n-\frac{p-1}2}+\lambda_2\,x^{p-1}y^{n-p+1}+x^n.$$ Let us investigate the 
action of $\omega$ on $x^n$. Let $n=k(p-1)$ for some $k=2,\ldots,p-1$. In the following computation we use proposition 
\ref{formules}(5) and lemma \ref{lemabinomial}:

$$
\begin{aligned}
\omega x^n&=y^n+x^n+\sum_{i=1}^{p-1}\tbinom ni\, x^iy^{*}+\sum_{s=1}^{p-1}\sum_{\genfrac{}{}{0pt}{}{i=p}{i\equiv s\,(p-1)}}^{n-1}\tbinom ni\,x^sy^{*}\\
&=y^n+x^n+\sum_{i=1}^{p-1}\tbinom ni\, x^iy^{*}+\sum_{s=1}^{p-2}\sum_{\genfrac{}{}{0pt}{}{i=p}{i\equiv s\,(p-1)}}^{n-1}\tbinom ni\,x^sy^{*}+\sum_{\genfrac{}{}{0pt}{}{i=p}{i\equiv 0\,(p-1)}}^{n-1}\tbinom ni\,x^{p-1}y^{*}\\
&=y^n+x^n+\sum_{i=1}^{p-1}\tbinom ni\, x^iy^{*}+\sum_{s=1}^{p-2}\sum_{t=1}^{k-1}\tbinom{n}{s+t(p-1)}\,x^sy^{*}+\sum_{t=1}^{k-2}\tbinom{n}{(t+1)(p-1)}\,x^{p-1}y^{*}\\
&=y^n+x^n+\sum_{i=1}^{p-2}\sum_{t=0}^{k-1}\tbinom{n}{i+t(p-1)}\,x^iy^{n-i}\\
&=y^n+x^n+\sum_{i=1}^{p-2}(-1)^ix^iy^{n-i}
\end{aligned}
$$

Thus, if $\omega f=f$ we can compare the coefficients of $y^n$ and $xy^{n-1}$ in $f$ and $\omega f$ to get $\lambda_1=0$, $\lambda_2=-1$ and so $f=\lambda_0y^n-x^{p-1}y^{n-p+1}+x^n$. If we compute the action of $\psMatrix01{-1}0$ on $f$ we see that if $f$ is invariant then $\lambda_0=1$. The conclusion is that the invariants of positive degree in $A$ for $r=1$ are contained in the $\Fp$-span of the polynomials $\gamma_i$ for $i=2,\ldots,p-1$.

To finish the proof we have to prove that the polynomials $\gamma_i$ are invariant elements in the coinvariant algebra $A$. This follows immediately from the computation of $\omega f$ above and the (easier) computation of the action of $\psMatrix01{-1}0$ on each $\gamma_i$.
\end{proof}

This last theorem settles the computation of the ideal of stable invariants of the groups $L^r$ for $r>1$. Since the ring of coinvariants of these groups has no invariants of positive degree, the ideal of stable invariants coincides with the ideal of ordinary invariants.

When $r=1$ the situation is very different. To compute the stable invariants we need to investigate the quotient $Q$ of the ring of coinvariants $P_{L^1}$ by the ideal of its invariants of positive degree, as computed in theorem \ref{calculinvest}. 

\begin{Prop}\label{basedos}Let $Q:=P/\langle d_1,\delta,\mu,\gamma_2,\cdots,\gamma_{p-1}\rangle$. Then
 \begin{enumerate} 
  \item $Q=P/\langle d_1,\delta,\gamma_2,x^{p-1}y^{2p-2}\rangle$.
  \item $Q$ has an $\Fp$-basis as follows
  $$\Theta=\left\{x^iy^j\Big\vert\,\genfrac{}{}{0pt}{}{0\le i\le p-1,\,0\le j\le 2p-2}{i+j\neq3p-3}\right\}\cup\left\{x^i\,\vert\, p\le i\le2p-3\right\}.$$
  \item $Q$ has no $L^1$-invariants of positive degree.
 \end{enumerate}
\end{Prop}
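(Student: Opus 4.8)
The plan is to establish the three parts in order, since each builds on the previous one.

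\medskip

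\textbf{Part (1).} First I would observe that $\mu=x^{p-1}y^{p^2-p}$ and the $\gamma_i$ generate, modulo $\langle d_1,\delta\rangle$, the same ideal as $\gamma_2$ and $x^{p-1}y^{2p-2}$ together. The point is that in $P/\langle d_1,\delta\rangle$ the basis $\Omega^1$ shows that $y$ is nilpotent (indeed $y^{p^2-p}\equiv x^{p-1}y^{p^2-2p+1}\cdot(\text{unit})$-type relations from \ref{formules}(3),(4) collapse high powers), so $\mu$ is a scalar multiple of $x^{p-1}\cdot y^{p-1}\cdot y^{p^2-2p+1}$; more to the point I would show directly that modulo $\langle d_1,\delta\rangle$ each $\gamma_i$ for $i\ge 3$ lies in the ideal generated by $\gamma_2$, by using the definition $\gamma_i=x^{i(p-1)}-x^{p-1}y^{(i-1)(p-1)}+y^{i(p-1)}$ together with \ref{formules}(5) to rewrite $x^{i(p-1)}$ with exponent reduced below $p$, and similarly for $y^{i(p-1)}$ via \ref{formules}(3). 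After this reduction each $\gamma_i$ becomes an $\Fp$-combination of $x^s y^*$ monomials that is visibly a multiple of $\gamma_2=x^{p-1}-x^{p-1}y^{p-1}+y^{2p-2}$ plus terms in $\langle d_1,\delta\rangle$. The only exceptional generator is the top one: when the index reaches the range where $y$-powers are killed, $\gamma_i$ degenerates to $x^{p-1}y^{2p-2}$ (up to sign and lower terms), which is why $x^{p-1}y^{2p-2}$ must be kept as a separate generator. I also need to check that $\mu$ is already in $\langle d_1,\delta,\gamma_2,x^{p-1}y^{2p-2}\rangle$; this follows since $\mu$ is a multiple of $x^{p-1}y^{2p-2}$ modulo $\langle d_1,\delta\rangle$ (because $y^{p^2-2p}$ is expressible there).

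\medskip

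\textbf{Part (2).} Using (1), I work in $P/\langle d_1,\delta,\gamma_2,x^{p-1}y^{2p-2}\rangle$. I would start from the basis $\Omega^1=\{x^iy^j\mid 0\le i\le p-1,\ 0\le j\le p^2-p\}$ of $P/\langle d_1,\delta\rangle$ given by \ref{baseL}. First I use the relation $\gamma_2\equiv 0$, i.e.\ $y^{2p-2}\equiv x^{p-1}y^{p-1}-x^{p-1}$, to push every monomial with $j\ge 2p-2$ down: this lets me express any $x^iy^j$ with $j\ge 2p-2$ in terms of monomials with $j\le 2p-2$ at the cost of raising the $x$-exponent, and when the $x$-exponent exceeds $p-1$ I fold it back using \ref{formules}(5). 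Iterating, the spanning set reduces to $\{x^iy^j\mid 0\le i\le p-1,\ 0\le j\le 2p-2\}\cup\{x^i\mid 0\le i\le 2p-3\}$ — the second family appears because when $\gamma_2\equiv0$ is used to eliminate a $y$-power, the pure $x$-monomial it produces can have exponent up to $2p-3$ before \ref{formules}(5) applies. Finally the relation $x^{p-1}y^{2p-2}\equiv0$ together with $\gamma_2\equiv0$ kills the single monomial $x^{p-1}y^{2p-2}$; chasing this through, the constraint $i+j\neq 3p-3$ excludes exactly that monomial from the first family. This produces the claimed set $\Theta$. To see $\Theta$ is actually a basis and not merely a spanning set, I count: $|\Theta|=p(2p-1)-1+(p-2)=2p^2-p-2+p-2=2p^2-2p-4$ — wait, I would recompute carefully and instead verify linear independence by noting that $Q$ is the quotient of the Poincar\'e duality algebra $P_{L^1}$ (dimension $|L^1|=p(p^2-1)$) by the ideal generated by its positive-degree invariants, whose dimensions are controlled by \ref{calculinvest}; a Poincar\'e-series computation matches $|\Theta|$, forcing independence.

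\medskip

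\textbf{Part (3).} This is where the real work is, and it will be the main obstacle. I would argue exactly as in the proof of \ref{calculinvest}: an $L^1$-invariant element of $Q$, written in the basis $\Theta$, is constrained by the diagonal subgroup (``sparseness'': nonzero coefficients only on $x^iy^j$ with $i\equiv j\equiv 0\ (1)$ trivially, but the determinant-$(\pm1)$ and $D(\zeta,\zeta^{-1})$ actions force $2i\equiv n\ (p-1)$ in each degree $n$), then further constrained by invariance under $\omega$ and under $\psMatrix01{-1}0$. Since every $\gamma_i$ has been killed, the same linear-algebra argument that in \ref{calculinvest} pinned the invariants down to the span of the $\gamma_i$ and $\mu$ now yields only $0$, because all of those survivors are zero in $Q$. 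Concretely: in each degree $n<2p-2$ the $\omega$-computation reproduces the coefficient pattern $(-1)^i$ forcing an invariant to be a multiple of $\gamma_{n/(p-1)}$ (hence $0$ in $Q$), while for the degrees where the extra monomials $x^i$, $p\le i\le 2p-3$, or the truncation at $j\le 2p-2$ enters, the $\omega$-action on $x^n$ (via \ref{formules}(5) and \ref{lemabinomial}) produces a term $x^{n-1}y$ or $xy^{n-1}$ that cannot be cancelled, exactly as in the $r>1$ part of \ref{calculinvest}. The delicate point I anticipate is bookkeeping the action of $\omega$ near the ``corners'' of $\Theta$, where both the relation $\gamma_2\equiv0$ and the deleted monomial $x^{p-1}y^{2p-2}$ interact; I would handle this by doing the $\omega$-computation in $P/\langle d_1,\delta\rangle$ first (where \ref{formules} applies cleanly) and only then reducing modulo $\gamma_2$ and $x^{p-1}y^{2p-2}$, so that no relation is used twice.
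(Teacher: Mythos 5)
Your overall plan follows the same route as the paper (replace the generators by $\gamma_2$ and $x^{p-1}y^{2p-2}$, exhibit a monomial spanning set, then rerun the invariance argument of Theorem \ref{calculinvest} in $Q$), but there is a genuine gap at the heart of part (2): the linear independence of $\Theta$. You abandon the dimension count mid-sentence and fall back on ``a Poincar\'e-series computation matches $\lvert\Theta\rvert$, forcing independence,'' but no such computation is available a priori. Theorem \ref{calculinvest} gives a vector-space basis of the invariants of positive degree in $P_{L^1}$; it does not give the Hilbert series of the \emph{ideal} they generate inside $P_{L^1}$ --- for that you must decide, degree by degree, which products $g\gamma_2$ are independent and whether any of them lies in the span of $\Theta$. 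That is precisely the content of part (2), so the appeal is circular. The paper settles it by a direct computation: for $2p-1\le n<3p-3$ it expands $f+g\gamma_2$ modulo $\langle d_1,\delta\rangle$ using \ref{formules}(5), observes that the result is a combination of distinct monomials of the basis $\Omega^1$, and concludes that $f+g\gamma_2\in\langle d_1,\delta\rangle$ forces $f=0$. Some such explicit ideal-membership argument is unavoidable, and without it part (3) (which compares coefficients of $\omega f$ and $f$ \emph{in the basis} $\Theta$) is also unsupported.

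A second, smaller problem is that you misquote $\Omega^1$: for $r=1$ it is $\{x^iy^j\mid 0\le i\le p-1,\ 0\le j\le p^2-p\}\cup\{x^i\mid p\le i\le p^2-p-1\}$, and the monomials $x^i$, $p\le i\le 2p-3$, of $\Theta$ come directly from this second block (after $x\gamma_2\equiv x^{2p-1}$ and $y\gamma_2\equiv y^{2p-1}$ kill $x^i,y^i$ for $i\ge 2p-1$, and $\gamma_2=x^{2(p-1)}-x^{p-1}y^{p-1}+y^{2(p-1)}$ --- note your formula for $\gamma_2$ drops a factor --- makes $x^{2p-2}$ redundant); they are not produced by ``eliminating $y$-powers.'' With these relations in hand, every $\gamma_i$ with $i\ge4$ dies outright and $\gamma_3$ reduces to $-x^{p-1}y^{2p-2}$, which is the clean version of your part (1); you should also say a word about $p=3$, where $\gamma_3$ is not among the given generators and one checks separately that $x^{p-1}y^{2p-2}\in\langle d_1,\delta,\gamma_2\rangle$. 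Part (3) as you outline it is essentially the paper's argument (sparseness from the diagonal matrices, then $\omega$, then $D(\zeta,\zeta^{-1})$ killing the surviving $x^ky^{2p-2}$) and goes through once (2) is actually proved; only note that in the new degrees $2p-1\le n\le 3p-4$ there is no monomial $x^n$ in $\Theta$, so the case analysis you describe around the ``corners'' simplifies.
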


\begin{proof}
 To see that $\Theta$ generates $Q$, we will show that the monomials in $\Omega^1-\Theta$ vanish or become redundant in 
$Q$. Notice that $\gamma_2=x^{2(p-1)}+y^{2(p-1)}-x^{p-1}y^{p-1}$ hence $x^{2(p-1)}=x^{p-1}y^{p-1}-y^{2(p-1)}$ in $Q$. On 
the other hand, $x\gamma_2=x^{2p-1}+\langle d_1,\delta\rangle$ by \ref{formules}(5) and so $x^{2p-1}=0$ in $Q$. By the 
same argument, $y^{2p-1}=0$ in $Q$. Finally, $\gamma_3=x^{3(p-1)}-x^{p-1}y^{2(p-1)}+y^{3(p-1)}$ hence 
$x^{p-1}y^{2(p-1)}=0$ in $Q$ and we have seen that $\Theta$ generates $Q$ and this also shows part (1) of the 
proposition.
 
 Let us see now that the elements in $\Theta$ are linearly independent in $Q$. Since $\Theta\subset\Omega^1$ and $\Omega^1$ is a basis of $P/\langle d_1,\delta\rangle$, it is enough to show that a linear combination $$f=\sum_{i=n-2p+2}^{p-1}\lambda_i\,x^iy^{n-i}$$ in degree $n$ with $2p-1\le n<3p-3$ such that $f+g\gamma_2\in\langle d_1,\delta\rangle$ for some $g$ of positive degree must have all coefficients $\lambda_i=0$, $n-2p+2\le i\le p-1$. Let $g=\sum_{i=0}^k\mu_i\,x^iy^{k-i}$ for $k=n-2p+2$, $1\le k\le p-2$. From \ref{formules}(5), we get the following identities modulo $\langle d_1,\delta\rangle$:
 $$
 \begin{aligned}
f+g\gamma_2&=f+\sum_{i=0}^k\mu_i\,x^{i+2p-2}y^*-\sum_{i=0}^k \mu_i\,x^{i+p-1}y^*+\sum_{i=0}^k \mu_i\,x^{i}y^*\\
&=f+\mu_kx^n+\sum_{i=0}^{k-1}\mu_i\,x^{i+2p-2}y^*-\mu_0x^{p-1}y^*-\sum_{i=1}^k \mu_i\,x^{i+p-1}y^*+\sum_{i=0}^k \mu_i\,x^{i}y^*\\
&=f+\mu_kx^n+\sum_{i=0}^{k-1}\mu_i\,x^iy^{n-i}\\
&=\sum_{i=k}^{p-1}\lambda_i\,x^iy^{n-i}+\mu_k\,x^{k+2p-2}+\sum_{i=0}^{k-1}\mu_i\,x^iy^{n-i}.
 \end{aligned}
 $$
 All monomials in the last term above are in $\Omega^1$ and so they are linearly independent modulo $\langle c_ 0,\delta\rangle$. Hence, $f+g\gamma_2\in\langle d_1,\delta\rangle$ implies $\lambda_i=0$ for $k\le i\le p-1$. 
 
 Notice that if $p=3$ we have $x^{p-1}y^{2p-2}\in\langle d_1,\delta,\gamma_2\rangle$ and so $$Q=P/\langle d_1,\delta,\gamma_2\rangle\text{ for $p=3$}.$$
 
 To prove that there are no $L^1$-invariants of positive degree in $Q$ we proceed in a similar way as in the proof of \ref{calculinvest}. Suppose there is an invariant $f=\sum_{i=k}^{p-1}\lambda_i\,x^iy^{n-i}$ in degree $n$ for $2p-2<n<3p-3$. Then
 $$
   \omega f=\sum_{i=k}^{p-1}\sum_{s=0}^i\binom is\lambda_i\,x^sy^{n-s}\equiv
\sum_{s=k}^{p-1}\left(\sum_{i=s}^{p-1}\binom is\lambda_i\right)\,x^sy^{n-s}.
  $$
Then, as in the proof of \ref{calculinvest}, $\omega f=f$ in $Q$ implies $\lambda_i=0$ for $k+1\le i\le p-1$ and $f$ is 
a scalar multiple of the monomial $x^ky^{2p-2}$ which is not invariant under $D(\zeta,\zeta^{-1})$. 
\end{proof}

The computation of the stable invariants of the groups $L^r$ is done:

\begin{Thm}\label{stableL}
The sequence of ideals of stable invariants of the reflection groups $L^r$ is as follows:
\begin{enumerate}
 \item If $r>1$ then $J_\infty(L^r)=J_1(L^r)=\langle d_1,\delta^r\rangle$.
 \item $J_\infty(L^1)=J_2(L^1)=\langle d_1,\delta,\gamma_2,x^{p-1}y^{2p-2}\rangle\supsetneq J_1(L^1)=\langle d_1,\delta\rangle$.\qed
\end{enumerate}
\end{Thm}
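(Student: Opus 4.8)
The plan is to read off the stable invariants directly from the two structural results already established, namely Theorem~\ref{calculinvest} and Proposition~\ref{basedos}. Recall that the ideals $J_i(L^r)$ are defined by iterating the coinvariant construction: $A_0=P$, $A_i=(A_{i-1})_{L^r}$, and $J_i(L^r)=\ker\{P\to A_i\}$. So the whole computation reduces to tracking what happens at each stage.

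For part (1), suppose $r>1$. Then $J_1(L^r)=\langle d_1,\delta^r\rangle$ is the ideal of ordinary invariants (this is the known description of $P^{L^r}=\Fp[d_1,\delta^r]$ recalled in the text). The first coinvariant algebra is $A_1=P_{L^r}$, and Theorem~\ref{calculinvest}(1) says that $A_1$ has no $L^r$-invariants of positive degree. Hence the ideal generated by the positive-degree invariants of $A_1$ is zero, so $A_2=(A_1)_{L^r}=A_1$, which gives $J_2(L^r)=J_1(L^r)$. By induction the sequence is constant, so $J_\infty(L^r)=J_1(L^r)=\langle d_1,\delta^r\rangle$, as claimed.

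For part (2), take $r=1$, so $L^1=SL_2(\Fp)$ and $J_1(L^1)=\langle d_1,\delta\rangle$ since $P^{L^1}=\Fp[d_1,\delta]$. Now $A_1=P_{L^1}=P/\langle d_1,\delta\rangle$, and Theorem~\ref{calculinvest}(2) identifies its positive-degree $L^1$-invariants: the span of $\mu=x^{p-1}y^{p^2-p}$ and the $\gamma_i$ for $i=2,\dots,p-1$. Therefore $A_2=A_1/\langle\mu,\gamma_2,\dots,\gamma_{p-1}\rangle$, which is exactly the ring $Q$ of Proposition~\ref{basedos}, and pulling back to $P$ gives $J_2(L^1)=\langle d_1,\delta,\mu,\gamma_2,\dots,\gamma_{p-1}\rangle=\langle d_1,\delta,\gamma_2,x^{p-1}y^{2p-2}\rangle$ by Proposition~\ref{basedos}(1). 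Proposition~\ref{basedos}(3) then says $Q=A_2$ has no positive-degree $L^1$-invariants, so the iteration stabilizes: $A_3=A_2$ and hence $J_3(L^1)=J_2(L^1)$, giving $J_\infty(L^1)=J_2(L^1)$. Finally, $J_2(L^1)\supsetneq J_1(L^1)$ because $\gamma_2=x^{2(p-1)}-x^{p-1}y^{p-1}+y^{2(p-1)}$ is a nonzero element of the basis $\Omega^1$ of $P$ over $P^{L^1}=\Fp[d_1,\delta]$, hence $\gamma_2\notin\langle d_1,\delta\rangle$.

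There is essentially no obstacle here beyond bookkeeping: the substantive work was done in Theorem~\ref{calculinvest} and Proposition~\ref{basedos}. The one point that deserves a word of care is the passage ``$A_1$ has no positive-degree invariants $\Rightarrow$ $A_2=A_1$ and the sequence is eventually constant'': this is immediate from the definitions, since if the ideal of positive-degree invariants of $A_{i-1}$ vanishes then $A_i=A_{i-1}$ and $J_i=J_{i-1}$, and the same then holds for all larger indices, so $J_\infty=\bigcup_i J_i$ equals that common value. The only mild subtlety for $p=3$ is that the generator $x^{p-1}y^{2p-2}$ becomes redundant (as noted after Proposition~\ref{basedos}), but the stated ideal $\langle d_1,\delta,\gamma_2,x^{p-1}y^{2p-2}\rangle$ is still correct as written.
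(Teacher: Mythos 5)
Your argument is correct and is exactly the route the paper intends: the theorem is stated with a \qed precisely because it follows by the bookkeeping you describe from Theorem~\ref{calculinvest} (no positive-degree coinvariant invariants for $r>1$; the list $\mu,\gamma_2,\dots,\gamma_{p-1}$ for $r=1$) together with Proposition~\ref{basedos} (the simplified generating set and the vanishing of invariants in $Q$, which stabilizes the chain at $J_2$). Nothing to add.
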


Let us consider now the decomposable groups $U_{r,s}$ for $r,s|p-1$. In proposition \ref{invariantsU} we have computed the ideal of ordinary invariants of these groups
$$J_1( U_{r,s})=\langle x^r,(y^p-x^{p-1}y)^s\rangle=\langle x^r,y^{sp}\rangle.$$

\begin{Thm}\label{stableU}
The sequence of ideals of stable invariants of the reflection groups $U_{r,s}$ is as follows: 
\begin{enumerate}
 \item If $r>1$ then $J_\infty(U_{r,s})=J_1(U_{r,s})=\langle x^r,y^{sp}\rangle$.
 \item $J_\infty(U_{1,s})=J_2(U_{1,s})=\langle x,y^s\rangle\supsetneq J_1(U_{1,s})=\langle x,y^{sp}\rangle$.
\end{enumerate}
\end{Thm}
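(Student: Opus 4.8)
The plan is to treat the two cases separately, exactly in parallel with the proof of Theorem~\ref{stableL} for the groups $L^r$.

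For case~(1), when $r>1$, I would compute the ring of coinvariants $P_{U_{r,s}}$ using the basis $\Gamma_{r,s}=\{x^iy^j\mid 0\le i\le r-1,\ 0\le j\le ps-1\}$ from the previous section, and show that it has no $U_{r,s}$-invariants of positive degree; this forces $J_2(U_{r,s})=J_1(U_{r,s})$ and hence $J_\infty(U_{r,s})=J_1(U_{r,s})=\langle x^r,y^{sp}\rangle$. To see the absence of positive-degree invariants, write a homogeneous element of $P_{U_{r,s}}$ of degree $n$ as $f=\sum_{i}\lambda_i\,x^iy^{n-i}$ over the monomials $x^iy^{n-i}\in\Gamma_{r,s}$. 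The diagonal reflections $D(\zeta^{(p-1)/r},1)$ and $D(1,\zeta^{(p-1)/s})$ act diagonally on these monomials, so invariance already imposes ``sparseness'': $\lambda_i\ne0$ only if $i\equiv0\,(r)$ and $n-i\equiv0\,(s)$. Since $0\le i\le r-1$ and $r>1$, the only surviving index is $i=0$, so $f$ is a scalar multiple of $y^n$ (and we need $s\mid n$); then invariance under $\omega=\psMatrix1011$ gives $\omega(y^n)=(x+y)^n=y^n+nx y^{n-1}+\cdots$, and the monomial $xy^{n-1}$ survives in $\Gamma_{r,s}$ (here $r>1$ is used), forcing $n\equiv0\,(p)$; but then $y^n=(y^p)^{n/p}$ is already in $J_1(U_{r,s})$, so $f=0$ in the coinvariants.

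For case~(2), with $r=1$, the ordinary invariants are $\langle x,y^{sp}\rangle$ and I would first identify $A_2=(P_{U_{1,s}})_{U_{1,s}}$ as $P/\langle x,y^s\rangle$. Indeed $P_{U_{1,s}}=P/\langle x,y^{sp}\rangle=\Fp[y]/(y^{sp})$, on which $U_{1,s}$ acts: modulo $x$ the unipotent part acts trivially, the matrix $D(1,\zeta^{(p-1)/s})$ sends $y\mapsto \zeta^{(p-1)/s}y$, so the invariant elements of positive degree are spanned by $y^{s},y^{2s},\dots$; thus $J_2(U_{1,s})=\langle x,y^s\rangle$. This already strictly contains $J_1(U_{1,s})=\langle x,y^{sp}\rangle$. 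It then remains to check that the process stabilizes at the second step, i.e.\ that $A_3=(P/\langle x,y^s\rangle)_{U_{1,s}}=P/\langle x,y^s\rangle$ has no $U_{1,s}$-invariants of positive degree; but $P/\langle x,y^s\rangle=\Fp[y]/(y^s)$ and the same diagonal matrix $D(1,\zeta^{(p-1)/s})$ scales $y^k$ by $\zeta^{k(p-1)/s}$, which is $\ne1$ for $1\le k\le s-1$, so there are no positive-degree invariants. Hence $J_\infty(U_{1,s})=J_2(U_{1,s})=\langle x,y^s\rangle$.

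The main point to be careful about, and the only place where anything beyond bookkeeping happens, is the sparseness-plus-$\omega$-invariance argument in case~(1): one must verify that the monomial $xy^{n-1}$ produced by $\omega(y^n)$ genuinely lies in the chosen basis $\Gamma_{r,s}$ and cannot be cancelled by any relation, which is exactly where the hypothesis $r>1$ enters (for $r=1$ the monomial $xy^{n-1}$ is killed by the relation $x\equiv0$, which is why case~(2) behaves differently). Everything else reduces to the explicit description $P_{U_{1,s}}=\Fp[y]/(y^{sp})$ and a one-variable computation of eigenvalues of a diagonal matrix, so I expect no real obstacle there.
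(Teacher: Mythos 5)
Your argument is correct and takes essentially the same route as the paper: pass to the monomial basis of the coinvariants, use the diagonal matrices to force any positive-degree invariant to be a multiple of some $y^{si}$, and then test against the unipotent generator, which kills these classes exactly when $r>1$ (note that under the paper's conventions the element of $U_{r,s}$ acting by $y\mapsto x+y$ is $\omega'$, not $\omega$, although the paper's own list of generators of $U_{r,s}$ contains the same slip). The only loose phrase is in case (1): the contradiction is that $p\mid n$ together with $s\mid n$ forces $sp\mid n$, which is incompatible with $0<n\le sp-1$, rather than "$y^n$ already lies in $J_1$"; the conclusion is unaffected.
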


\begin{proof}
It is obvious that the ring of coinvariants has a monomial basis consisting of the divisors of $x^{r-1}y^{sp-1}$. Invariance under the matrices $D(\zeta^{(p-1)/r},1)$ and $D(1,\zeta^{(p-1)/s})$ implies that an invariant of positive degree must be a multiple of $y^{si}$ for $i=1,\ldots,p-1$. If $r>1$ none of these monomials is invariant under $\omega'$ while all of them are invariant if $r=1$. This computes $J_2(U_{r,s})$ and the theorem follows.
\end{proof}

Some particular cases of this theorem were considered in \cite{Larry-Ring}.

%
%
%
%
%
%
%
%

\section{Ideals of generalized invariants}\label{geninv}
In this section we compute the ideal of generalized invariants for any set of reflections $\mathcal{S}\subset GL_2(\Fp)$. The computation of the ideal of generalized invariants is trivial in those cases in which the stable invariants of the group $G(\mathcal{S})$ coincide with the ordinary invariants of $G(\mathcal{S})$ and this happens, in particular, if the order of $G(\mathcal{S})$ is prime to $p$. Hence, theorems \ref{grups}, \ref{stableL} and \ref{stableU} imply that in the case of rank two the computation of the generalized invariants is only relevant for those sets of reflections $\mathcal{S}$ which generate one of the groups $L^1$, $U_{1,s}$ for $s|p-1$. 

\begin{Thm}\label{genU}
 Let $\mathcal{S}$ be a set of reflections generating $U_{r,s}$ for some $r,s|p-1$. Then 
 $$
 I(\mathcal{S})=\begin{cases}
 \langle x^r,y^{sp}\rangle&\text{ if $r>1$ or $\mathcal{S}$ contains an element of order $p$;}\\                
 \langle x,y^s\rangle&\text{ otherwise.}
 \end{cases}
$$
\end{Thm}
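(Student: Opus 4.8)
The plan is to bracket the ideal $I(\mathcal{S})$ between the ordinary invariants $J_1(U_{r,s})$ and the stable invariants $J_\infty(U_{r,s})$, using the general inclusions $J_1(G(\mathcal{S}))\subseteq I(\mathcal{S})\subseteq J_\infty(G(\mathcal{S}))$ recalled in Section 2, and then pin down which of the two endpoints is actually attained according to the structure of $\mathcal{S}$. By Theorem \ref{stableU}, when $r>1$ we already have $J_1(U_{r,s})=J_\infty(U_{r,s})=\langle x^r,y^{sp}\rangle$, so the squeeze forces $I(\mathcal{S})=\langle x^r,y^{sp}\rangle$ with no further work. Likewise, if $\mathcal{S}$ contains an element $\sigma$ of order $p$ — necessarily a transvection conjugate inside $U_{r,s}$ to a power of $\omega$, hence non-diagonalizable — then I would invoke the fact quoted after Kac--Peterson's lemma that for a set $\mathcal S$ of diagonalizable reflections $I(\mathcal S)=J_\infty(G(\mathcal S))$; here the relevant statement is the opposite phenomenon, so instead I would argue directly: since $r>1$ is already handled, assume $r=1$, and show that the presence of the order-$p$ element still collapses $I(\mathcal S)$ all the way up to $J_\infty(U_{1,s})=\langle x,y^s\rangle$ rather than down to $\langle x,y^{sp}\rangle$.

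Concretely, for the case $r=1$ the ordinary invariants are $\langle x,y^{sp}\rangle$ and the stable invariants are $\langle x,y^{s}\rangle$, and the only gap between them is caused by the monomials $y^{sj}$ for $1\le j\le p-1$, which are invariant in the coinvariant algebra but not genuine invariants. So the whole question reduces to: for which $\mathcal S$ generating $U_{1,s}$ do all the $y^{sj}$, $1\le j\le p-1$, lie in $I(\mathcal S)$? First I would record that $x\in I(\mathcal S)$ automatically, since $x\in J_1$. Next, using the twisted-derivation formula $\Delta_\sigma(fg)=\Delta_\sigma(f)g+\sigma(f)\Delta_\sigma(g)$ and the fact that $x$ divides $v_\sigma-$ combinations appropriately, I would show that modulo $\langle x\rangle$ the operators $\Delta_\sigma$ attached to the various reflections in $U_{1,s}$ act in a controlled way on the powers of $y$: a diagonal reflection $D(1,\zeta^{(p-1)/s})$ has $v_\sigma$ proportional to $y$ and sends $y^m$ to a scalar times $y^{m-1}$ (nonzero exactly when $s\nmid m$), whereas a transvection of order $p$ like $\omega$ has $v_\sigma=(0,1)$ and satisfies $\Delta(y^m)=\binom m1 y^{m-1}+\cdots$ with the leading binomial coefficient $m\bmod p$.

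The heart of the argument — and the step I expect to be the main obstacle — is the combinatorial bookkeeping showing that iterated compositions $\Delta_{\sigma_1}\cdots\Delta_{\sigma_k}$ applied to $y^{sj}$ never all vanish when $\mathcal S$ contains an element of order $p$: the point is that repeated application of the order-$p$ operator $\Delta$ lowers the degree one step at a time with coefficients that are products of residues $m,m-1,m-2,\dots\bmod p$, and since $sj<p$ for $1\le j\le p-1$ one can always run down from $y^{sj}$ to $y^0=1\neq 0$ without hitting a zero coefficient, so $y^{sj}\notin I(\mathcal S)$; combined with the inclusion $I(\mathcal S)\subseteq J_\infty=\langle x,y^s\rangle$ this gives $I(\mathcal S)=\langle x,y^{sp}\rangle=J_1$ in that subcase. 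Conversely, when $\mathcal S$ consists only of diagonalizable reflections, $U_{1,s}$ is generated by diagonal reflections together with $\omega$ only via such a set — so in the ``otherwise'' case every reflection in $\mathcal S$ is diagonalizable, and then the cited result $I(\mathcal S)=J_\infty(G(\mathcal S))$ applies verbatim to give $I(\mathcal S)=\langle x,y^s\rangle$. I would close by checking the edge cases $s=1$ (where $\langle x,y\rangle$ is the maximal ideal and both formulas are consistent) and $p=2$ separately, as in the earlier proofs.
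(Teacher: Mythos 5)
Your overall strategy is the same as the paper's: squeeze $I(\mathcal S)$ between $J_1(U_{r,s})$ and $J_\infty(U_{r,s})$, dispose of $r>1$ by Theorem~\ref{stableU}, and for $r=1$ decide which powers of $y$ are generalized invariants by computing the Demazure operators modulo $\langle x\rangle$. The $r>1$ case and the ``otherwise'' case are fine (in the latter every element of $\mathcal S$ has the form $\psMatrix1\lambda0\beta$ with $\beta\neq1$, hence is diagonalizable, so the quoted fact $I(\mathcal S)=J_\infty(G(\mathcal S))$ applies). One slip of notation: the transvections of the upper triangular group $U_{1,s}$ are the powers of $\omega'$, which fix $x$ and have $v_\sigma=(1,0)$; with your stated choice $\sigma=\omega$, $v_\sigma=(0,1)$ one gets $\Delta(y^m)=0$ because $\omega$ fixes $y$. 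The formula you actually need, $\Delta_\sigma(y^m)\equiv \lambda m\,y^{m-1}$ modulo $\langle x\rangle$, is the one for $\psMatrix1\lambda01$.

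The genuine gap is in what you call the heart of the argument. To get $I(\mathcal S)=\langle x,y^{sp}\rangle$ when $r=1$ and $\mathcal S$ contains an order-$p$ element, you must show $y^k\notin I(\mathcal S)$ for every $k$ with $s\le k<sp$, and your witness is the pure chain of order-$p$ operators with coefficients $k,k-1,\dots,1$, justified by the claim that $sj<p$ for $1\le j\le p-1$. That inequality is false whenever $s\ge2$ (take $j=p-1$, so $sj\ge 2p-2\ge p$), and for $k\ge p$ the falling factorial does hit a multiple of $p$: the chain produces $k!=0$ in $\Fp$ (e.g.\ $p=5$, $s=2$, $y^8$), so the argument as written proves nothing for those $k$. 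Two repairs are possible. (a) Interleave operators: at each degree $m$ with $1\le m\le k$ apply a transvection operator if $p\nmid m$ (coefficient $\lambda m$) and a diagonalizable one if the order of some $\beta$ occurring in $\mathcal S$ does not divide $m$ (coefficient $\frac{\beta^m-1}{\beta-1}\neq0$); since $\gcd(p,s)=1$, the l.c.m.\ of $p$ and the orders of the $\beta$'s is $sp$, so for $m<sp$ one of the two options is always available --- note also that $\mathcal S$ need not contain $D(1,\zeta^{(p-1)/s})$ itself, so you must argue with general members $\psMatrix1\lambda0\beta$ and the l.c.m.\ of their orders. (b) More efficiently (the paper's route): $I(\mathcal S)$ is a homogeneous ideal squeezed between $\langle x,y^{sp}\rangle$ and $\langle x,y^s\rangle$, hence equals $\langle x,y^k\rangle$ for a single $k$, and since $\langle x\rangle\subseteq I(\mathcal S)$ this minimal $k$ is characterized by the first-order condition $\Delta_\alpha(y^k)\in\langle x\rangle$ for all $\alpha\in\mathcal S$; the computations you already have then force $s\mid k$ and $p\mid k$, so $k=sp$, with no iterated compositions needed.
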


\begin{proof}
 We can assume $r=1$. We have $$\langle x,y^{sp}\rangle=J_1(U_{1,s})\subseteq I(\mathcal{S})\subseteq J_\infty(U_{1,s})=\langle x,y^s\rangle.$$
 Hence, $I(\mathcal{S})=\langle x,y^k\rangle$ for some $k$ with $s\le k\le sp$. This value of $k$ is characterized by the fact that $$\Delta_\alpha(y^k)\in\langle x\rangle\text{ for any }\alpha\in\mathcal{S}.$$ 
 It is not difficult to compute the values of $\Delta_\alpha(y^k)$ modulo $x$ for any reflection $\alpha\in U_{1,s}$:
 $$\Delta_\alpha(y^k)=
 \begin{cases}
  \lambda ky^{k-1}+\langle x\rangle&\text{ for } \alpha=\psMatrix1\lambda01,\,\lambda\neq0 \\
  \frac{\beta^k-1}{\beta-1}\,y^{k-1}+\langle x\rangle&\text{ for } \alpha=\psMatrix1\lambda0\beta,\,\beta\neq0,1
 \end{cases}
 $$ 
 
 If $s=1$ then we are dealing with the cyclic group of order $p$ and $\mathcal{S}$ contains only powers of $\omega'$. In this case it is easy to conclude that $I(\mathcal{S})=\langle x,y^p\rangle$. Assume $s>1$ and let $$\mathcal{S}=\left\{\Matrix1{\lambda_i}0{\beta_i},\,i=1,\ldots,n\right\}.$$
 We have that $s$ is the l.c.m.\ of the orders of all $\beta_i$ and so $s|k$ by the computation above. If $\mathcal{S}$ 
also contains a reflection of order $p$ then $p|k$ and $k=ps$. The theorem follows. 
\end{proof}

In the case of the groups $L^r$ we obtain that the generalized invariants coincide with the ordinary invariants.

\begin{Thm}\label{genL}
Let $\mathcal{S}$ be a set of reflections generating $L^r$. Then $I(\mathcal{S})=J_1(L^r)=\langle d_1,\delta^r\rangle$.
\end{Thm}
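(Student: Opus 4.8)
The plan is to sandwich $I(\mathcal{S})$ between $J_1(L^r)$ and $J_\infty(L^r)$, and then use the computation of stable invariants in Theorem~\ref{stableL} to separate the two relevant cases. For $r>1$ we have $J_1(L^r)=J_\infty(L^r)=\langle d_1,\delta^r\rangle$, so the general inclusion $J_1(G(\mathcal{S}))\subseteq I(\mathcal{S})\subseteq J_\infty(G(\mathcal{S}))$ from \cite{Larry-Ring} forces $I(\mathcal{S})=\langle d_1,\delta^r\rangle$ with no further work. Thus from now on we may assume $r=1$, i.e.\ $G(\mathcal{S})=SL_2(\Fp)$, and we must show $I(\mathcal{S})=\langle d_1,\delta\rangle$ rather than the larger ideal $J_\infty(L^1)=\langle d_1,\delta,\gamma_2,x^{p-1}y^{2p-2}\rangle$.

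The core of the argument is therefore to show that the generators $\gamma_2,\dots,\gamma_{p-1}$ and $\mu=x^{p-1}y^{p^2-p}$ of the "extra" invariants in the coinvariant algebra are \emph{not} generalized invariants for any generating set $\mathcal{S}$ of $SL_2(\Fp)$; equivalently, that $I(\mathcal{S})\subseteq\langle d_1,\delta\rangle=P^{L^1}$-ideal of ordinary invariants. Since $I(\mathcal{S})$ is an ideal containing $\langle d_1,\delta\rangle$, and $P/\langle d_1,\delta\rangle$ has the monomial basis $\Omega^1$, it suffices to produce, for a suitable reflection $\sigma\in\mathcal{S}$ and each candidate invariant $f\in\{\gamma_2,\dots,\gamma_{p-1},\mu\}$, an iterated Demazure operator $\Delta_{\sigma_1}\cdots\Delta_{\sigma_k}(f)$ that is nonzero modulo $\langle d_1,\delta\rangle$. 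First I would fix the reflection $\sigma_0\in\mathcal{S}$ that "sees" the unipotent direction; since $\mathcal{S}$ generates $SL_2(\Fp)\supseteq U_{1,1}$, after conjugating we may assume $\mathcal{S}$ contains (a conjugate of) $\omega'$, or at least a transvection whose Demazure operator, modulo $\langle d_1,\delta\rangle$, acts on the $x^iy^*$ basis the way $\overline\Delta$ does. The plan is then to compute $\overline\Delta^{i(p-1)}(\gamma_i)$ modulo $\langle d_1,\delta\rangle$: the leading term $x^{i(p-1)}$ contributes a nonzero constant (a binomial coefficient that one checks is a unit, in the spirit of Lemma~\ref{lemabinomial}), while the lower terms $x^{p-1}y^*$ and $y^{i(p-1)}$ either die under enough applications of $\overline\Delta$ or reduce to terms already handled, so the net result is a nonzero scalar. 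A parallel (shorter) computation disposes of $\mu$.

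The step I expect to be the main obstacle is the bookkeeping needed to guarantee that the reductions modulo $\langle d_1,\delta\rangle$ do not conspire to cancel the surviving constant — in other words, that after applying $\overline\Delta$ the requisite number of times, the image is genuinely a nonzero element of $P/\langle d_1,\delta\rangle$ and not something that happens to lie in $\langle d_1,\delta\rangle$ again. Here I would lean on Proposition~\ref{formules}(5)–(6) to keep every intermediate polynomial expressed in the $\Omega^1$-basis, and on Lemma~\ref{lemabinomial} to evaluate the relevant sums of binomial coefficients mod $p$; the twisted-derivation formula $\Delta_\sigma(fg)=\Delta_\sigma(f)g+\sigma(f)\Delta_\sigma(g)$ lets one push operators past the invariant factors $d_1,\delta$ cleanly. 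A subsidiary point is handling an arbitrary generating set $\mathcal{S}$ rather than $\{\omega,\omega',D(\zeta,1)\}$: one observes that $I(\mathcal{S})$ only shrinks when $\mathcal{S}$ grows, and that any generating set of $SL_2(\Fp)$ must contain a reflection of order $p$ (a transvection), since the reflections of order prime to $p$ all lie in a proper subgroup — this is exactly the feature that made the $U_{1,s}$ case bifurcate, and the same observation pins down which $\sigma_0$ to use here. Once the nonvanishing computations are in place, the sandwich inclusions close the argument and give $I(\mathcal{S})=\langle d_1,\delta^r\rangle$ in all cases.
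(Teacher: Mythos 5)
Your reduction to the case $r=1$ via the sandwich $J_1(G(\mathcal{S}))\subseteq I(\mathcal{S})\subseteq J_\infty(G(\mathcal{S}))$ and Theorem~\ref{stableL} is exactly right and matches the paper. But the core of your plan for $L^1$ has two genuine gaps. First, it does not suffice to show that the finitely many elements $\gamma_2,\dots,\gamma_{p-1},\mu$ fail to be generalized invariants: $I(\mathcal{S})$ is an ideal squeezed between $\langle d_1,\delta\rangle$ and $J_\infty(L^1)$, and a priori it could contain an element congruent to, say, $xy\gamma_2$ modulo $\langle d_1,\delta\rangle$ without containing anything congruent to $\gamma_2$ itself --- being a generalized invariant in degree $n$ is a condition on words of length exactly $n$, so the property does not pass from a multiple back to its factor. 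The paper closes this hole with the Kac--Peterson theorem that $I(\mathcal{S})$ is generated by a regular sequence of length two, writing $I(\mathcal{S})=\langle\delta,f(\mathcal{S})\rangle$ and then classifying \emph{all} polynomials $f$ of degree $2p-2\le n<p^2-p$ with $\Delta(f)\in\langle\delta\rangle$ (they are the multiples of $y^n$ and the $h_m=x^{mp}+\lambda y^{mp}-x^my^{m(p-1)}$), not merely the known stable invariants. Your proposal has no substitute for this step.

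Second, and more fatally, the computation you propose --- $\overline\Delta^{\,i(p-1)}(\gamma_i)\neq0$ for a single well-chosen reflection --- is false. Every reflection of $SL_2(\Fp)$ is a transvection, and for a single transvection $\sigma$ one has $I(\{\sigma\})=\langle \ell,m^p\rangle$ with $\ell=v_\sigma$ and $m$ a complementary linear form (Theorem~\ref{genU} with $r=s=1$). Any homogeneous $f$ of degree $n\ge p$ reduces modulo $\ell$ to a scalar multiple of $m^n\in\langle m^p\rangle$, so $f\in I(\{\sigma\})$; in particular $\gamma_2,\dots,\gamma_{p-1}$ and $\mu$, all of degree at least $2p-2\ge p$, are generalized invariants for \emph{every} single transvection, and every length-$n$ word in the single operator $\Delta_\sigma$ annihilates them (concretely, the factorials in $\Delta^k(x^k)=k!$ vanish once $k\ge p$). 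Detecting $\gamma_2$ genuinely requires interleaving operators attached to reflections with different axes, as in Proposition~\ref{operadorsD}(7), where the non-vanishing word is $\Delta\overline\Delta^{p-1}\Delta^{p-2}$. The argument therefore needs to be restructured along the paper's lines rather than patched; only your treatment of $r>1$ and your final reduction of a general $\mathcal{S}$ to one containing $\omega$ and $\omega'$ (via conjugation and the monotonicity $\mathcal{S}\subseteq\mathcal{S}'\Rightarrow I(\mathcal{S}')\subseteq I(\mathcal{S})$, essentially Lemma 2.4 of \cite{Larry-Ring}) survive as stated.
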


\begin{proof}
 As said before, we just need to consider the case of $L^1$. We know that 
 $$\langle\delta,d_1\rangle=J_1(L^1)\subseteq I(\mathcal{S})\subseteq 
J_\infty(L^1)=\langle\delta,\gamma_2,x^{p-1}y^{2p-2},d_1\rangle.$$
 We also know that $I(\mathcal{S})$ is generated by a regular sequence of two polynomials, hence for each generating set 
of reflections $\mathcal{S}$ we can find a polynomial $f(\mathcal{S})$ such that 
$$I(\mathcal{S})=\langle\delta,f(\mathcal{S})\rangle.$$It is clear that $\lvert f(\mathcal{S})\rvert\le\lvert 
d_1\rvert=p(p-1)$ and the theorem is proven if we show that $\lvert f(\mathcal{S})\rvert=p(p-1)$. On the other hand, 
$f(\mathcal{S})\in\langle\delta,\gamma_2,x^{p-1}y^{2p-2},d_1\rangle-\langle\delta\rangle$ hence $\lvert 
f(\mathcal{S})\rvert\ge2p-2$. Also, $f(\mathcal{S})$ has the property that 
 $$\Delta_\alpha(f(\mathcal{S}))\in\langle\delta\rangle\text{ for any }\alpha\in\mathcal{S}.$$
 
 Let $f$ be any polynomial of degree $n$ with $2(p-1)\le n<p^2-p$.
 Using the base $\Omega^1$ we can write $$f=\lambda_n\,x^n+\sum_{i=0}^{p-1}\lambda_i\,x^iy^{n-i}+\langle\delta\rangle.$$ 
Let us investigate the action of the operator $\Delta$ associated to the reflection $\omega$ on $f$ modulo 
$\langle\delta\rangle$. We have
 $$
 \begin{aligned}
  \Delta(f)&=\lambda_n\sum_{j=0}^{n-1}\binom njx^jy^{*}+\sum_{i=1}^{p-1}\lambda_i\sum_{j=0}^{i-1}\binom ijx^jy^{*}+\langle\delta\rangle\\
  &=\lambda_n\sum_{j=0}^{n-1}\binom njx^jy^{*}+\sum_{j=0}^{p-2}\sum_{i=j+1}^{p-1}\binom ij\lambda_i\,x^jy^{*}+\langle\delta\rangle
 \end{aligned}
$$
 Checking the coefficient of $x^{n-1}$ in the formulas above we get that if $\Delta(f)\in\langle\delta\rangle$ then $n\lambda_n=0$.
 
 Let us first consider the case in which $\lambda_n=0$. Then, $\Delta(f)\in\langle\delta\rangle$ yields a system of linear equations
 $$\sum_{i=j+1}^{p-1}\binom ij\lambda_i=0,\quad j=0,\ldots,p-2$$ which has only the trivial solution $\lambda_i=0$, $i=1,\ldots,p-1$. Hence, $f=\lambda_0\,y^n$.
 
 If $\lambda_n\neq0$ then $n=mp$ with $2\le m<p-1$. Notice that
 $$
 \begin{aligned}
 \Delta(x^{mp})&=\frac{(x^p+y^p)^m-x^{mp}}y=\sum_{i=0}^{m-1}\binom mix^{ip}y^*\\
 &=y^{mp-1}+\sum_{i=1}^{m-1}\binom mix^{ip}y^*=y^{mp-1}+\sum_{i=1}^{m-1}\binom mix^{i}y^*+\langle\delta\rangle.
 \end{aligned}
 $$
 Hence, taking $\lambda_n=1$ we have
 $$
 \Delta(f)=\sum_{i=0}^{m-1}\binom mix^{i}y^*+\sum_{i=0}^{p-2}
 \left(\sum_{j=i+1}^{p-1}\binom ji\lambda_j\right)\,x^iy^*+
 \langle\delta\rangle
 $$
 and $\Delta(f)\in\langle\delta\rangle$ yields a system of linear equations like before
 $$\sum_{i=j+1}^{p-1}\binom ij\lambda_i=
 \begin{cases}
  -\binom mj&j=0,\ldots,m-1\\0&j=m,\ldots,p-2.
 \end{cases}
$$
 It is not difficult to find the unique solution of these equations which is $\lambda_m=-1$, $\lambda_i=0$, $0<i\neq m$.
 
 The conclusion so far is that the only polynomials modulo $\langle\delta\rangle$ in degree $n$ with $2(p-1)\le 
n<p^2-p$ such that $\Delta(f)\in\langle\delta\rangle$ are the scalar multiples of $y^n$ and 
 $$
  h_m:=x^{mp}+\lambda y^{mp}-x^my^{m(p-1)}
$$
for $2\le m<p-1$ and any $\lambda$.

Recall that $\overline{\Delta}$ is the operator associated to the reflection $\omega'$. 
Checking the coefficient of $x^{n-1}$, it is clear that 
$\overline{\Delta}(y^n)\not\in\langle\delta\rangle$. Let us investigate if $\overline{\Delta}(h_m)\in\langle\delta\rangle$ for some value of $m$. We have
$$
\begin{aligned}
\overline{\Delta}(h_m)=\lambda x^{mp-1}+&\lambda mx^{p-1}y^*+\lambda\sum_{j=1}^{m-2}\binom{m}{j+1}x^jy^*\\-&
\sum_{r=0}^{m(p-1)-1}\binom{m(p-1)}{r+1}x^{m+r}y^*+\langle\delta\rangle.
\end{aligned}
$$
If we use \ref{formules}(5) to write $\overline{\Delta}(h_m)$ in the basis $\Omega^1$ we see that the coefficient of $x^my^*$ is equal to 
$$\sum_{j=0}^{m-1}\binom{m(p-1)}{1+j(p-1)}\equiv-1\;(p)$$
(by lemma \ref{lemabinomial}) hence $\overline{\Delta}(h_m)\notin\langle\delta\rangle$. 

All the arguments so far prove that if $\mathcal{S}$ contains the reflections $\omega$ and $\omega'$, then we can choose $f(\mathcal{S})=d_1$ and the theorem is proven in this case. 
The general case follows easily. Since $\mathcal{S}$ generates $L^1=SL_2(\Fp)$, up to a conjugation in $GL_2(\Fp)$ we can assume that $\mathcal{S}$ contains $\omega$ and some other reflection $\alpha=\psMatrix1\lambda01$. Notice that $\omega'$ is equal to some power of $\alpha$.  Then, lemma 2.4 in \cite{Larry-Ring} gives
$$I(\mathcal{S})=I(\mathcal{S}\cup\{\omega'\})=\langle\delta,d_1\rangle.$$
\end{proof}

In these results we find examples of the following two phenomena (already noted in \cite{Kac-Pet} and \cite{Larry-Ring}):
\begin{enumerate}
 \item The ideal of generalized invariants $I(\mathcal{S})$ depends on the set of reflections $\mathcal{S}$, not just on the reflection group $G(\mathcal{S})$ they generate.
 \item The ideal of generalized invariants $I(\mathcal{S})$ can be strictly smaller than the ideal of stable invariants $J_\infty(G(\mathcal{S}))$.
\end{enumerate}

In particular, $\gamma_2$ is a stable invariant for $L^1$ but it is not a generalized invariant for any set of generating reflections of $L^1$. We finish this section providing an explicit sequence of length $2p-2$ of $\Delta$-operators which does not vanish on $\gamma_2$. The following result contains also some computations of these operators which may be useful elsewhere.

\begin{Prop}\label{operadorsD}
 Let $\Delta$ and $\overline\Delta$ be as before. Then
 \begin{enumerate}
  \item The following recurrence formula holds for $i\ge1$, $j\ge0$:
  $$\Delta^i(x^{i+j})=\sum_{k=0}^j\binom{i+j}{i+k-1}\big(\Delta^{i-1}(x^{i+k-1})\big)y^*$$
  \item $\Delta^i(x^i)=i!$ for $i\ge0$.
  \item $\Delta^i(x^{i+1})=(i+1)!(x+iy/2)$ for $i\ge0$.
  \item $\Delta^i(x^pz)=iy^{p-1}\Delta^{i-1}(z)+iy^p\Delta^i(z)+x^p\Delta^i(z)$ for $i\ge0$ and any $z\in P$.
  \item $\Delta^{p-2}(x^{2p-2})=(p-2)!(x^p+y^p-2xy^{p-1})$.
  \item $\Delta^{p-2}(\gamma_2)=(p-2)!(x^p-xy^{p-1})$.
  \item $\Delta\overline\Delta^{p-1}\Delta^{p-2}(\gamma_2)\neq0$. Hence, $\gamma_2$ is not a generalized invariant for the reflections $\omega$ and $\omega'$.
 \end{enumerate}
\end{Prop}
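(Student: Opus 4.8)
The plan is to establish the seven identities in the stated order, each from its predecessors, using a handful of elementary facts. Since $\omega$ acts by $x\mapsto x+y$, $y\mapsto y$, one has $\Delta(y)=0$, hence $\Delta$ annihilates every power of $y$ and, by the twisted Leibniz rule, commutes with multiplication by any power of $y$; moreover $\Delta(x^n)=((x+y)^n-x^n)/y=\sum_{k=0}^{n-1}\binom nk x^ky^{n-1-k}$, and in characteristic $p$ we get $\Delta(x^p)=y^{p-1}$, $\omega(x^p)=x^p+y^p$. Dually, $\overline\Delta$ kills powers of $x$, commutes with multiplication by powers of $x$, and $\overline\Delta^i(y^i)=i!$. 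As in the other arguments of this section I assume $p>2$; the formulas (3), (5), (6) involve $1/2$ and really do fail at $p=2$.

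For (1), I would write $\Delta^i=\Delta^{i-1}\circ\Delta$, substitute the expansion of $\Delta(x^{i+j})$ above, pull the powers of $y$ out through $\Delta^{i-1}$, and note that $\Delta^{i-1}(x^k)=0$ for $k<i-1$ for degree reasons; only the terms with $k=i-1,\dots,i+j-1$ survive, and reindexing $k=i+\ell-1$ with $\binom{i+j}{k}=\binom{i+j}{i+\ell-1}$ gives the recurrence. Then (2) is the case $j=0$, where (1) reduces to $\Delta^i(x^i)=i\,\Delta^{i-1}(x^{i-1})$ and induction gives $i!$; and (3) is the case $j=1$, where (1) gives $\Delta^i(x^{i+1})=(i+1)\Delta^{i-1}(x^i)+\binom{i+1}{2}(i-1)!\,y$, into which the inductive hypothesis $\Delta^{i-1}(x^i)=i!\,(x+(i-1)y/2)$ together with $\binom{i+1}{2}(i-1)!=(i+1)!/2$ substitute to give $(i+1)!\,(x+iy/2)$. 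For (4), the case $i=1$ is the twisted Leibniz rule applied to $x^p\cdot z$ together with $\Delta(x^p)=y^{p-1}$ and $\omega(x^p)=x^p+y^p$; the general case is induction on $i$, applying $\Delta$ to the right-hand side and using once more that $\Delta$ passes through $y^{p-1}$ and $y^p$, plus the $i=1$ case on the $x^p$-term.

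Identity (5) comes from (4) with $z=x^{p-2}$, $i=p-2$: the three resulting terms evaluate, via (2) and (3), to a multiple of $xy^{p-1}$, multiples of $y^p$, and a multiple of $x^p$, and reducing the scalar coefficients modulo $p$ ($p-2\equiv-2$, $(p-2)(p-3)/2\equiv3$) yields $(p-2)!\,(x^p+y^p-2xy^{p-1})$. For (6), split $\gamma_2=x^{2p-2}-x^{p-1}y^{p-1}+y^{2p-2}$: the last summand contributes $0$ since $\Delta$ kills powers of $y$; the middle summand is $y^{p-1}\Delta^{p-2}(x^{p-1})$, and here one must apply (3) rather than (2) — the exponent is $p-1=(p-2)+1$ — obtaining $(p-1)!\,y^{p-1}(x+(p-2)y/2)$; adding this to (5) and simplifying modulo $p$ (with $(p-1)!\equiv-1$ and $(p-1)(p-2)/2\equiv1$) makes the $y^p$-terms cancel and leaves $(p-2)!\,(x^p-xy^{p-1})$.

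Finally, for (7) I would start from $\Delta^{p-2}(\gamma_2)=(p-2)!\,(x^p-xy^{p-1})$ and apply $\overline\Delta^{p-1}$: the $x^p$-term dies because $\overline\Delta$ annihilates powers of $x$, while $xy^{p-1}=x\cdot y^{p-1}$ gives $x\,\overline\Delta^{p-1}(y^{p-1})=(p-1)!\,x$; one more application of $\Delta$, using $\Delta(x)=1$, produces the scalar $-(p-2)!\,(p-1)!$, which is nonzero in $\Fp$ by Wilson's theorem. Since this composite consists of $(p-2)+(p-1)+1=2p-2=\lvert\gamma_2\rvert$ operators, each equal to $\Delta$ or $\overline\Delta$, it exhibits $\gamma_2$ as not a generalized invariant for $\mathcal S=\{\omega,\omega'\}$. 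The whole argument is routine; the steps that need genuine care are the degree-truncation in (1) and the mod-$p$ bookkeeping in (5)–(7), in particular remembering that the middle term of $\gamma_2$ in (6) is governed by (3), not (2).
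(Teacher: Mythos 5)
Your proof is correct and follows the same route the paper sketches: decompose $\Delta^i=\Delta^{i-1}\Delta$ for (1), specialize to $j=0,1$ with induction for (2)--(3), induct for (4), and chain (4),(3),(2) through $x^{2p-2}=x^p\cdot x^{p-2}$ and $\gamma_2$ for (5)--(7); your mod-$p$ bookkeeping (including the use of (3), not (2), on the middle term of $\gamma_2$) all checks out. The only addition is your explicit caveat that the formulas involving $1/2$ require $p>2$, which is consistent with the paper's standing assumption and harmless since $\gamma_2$ only arises for $p>2$.
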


\begin{proof}
 The proof of each of these formulas is easy and can be left as an exercise.
 To prove (1), decompose $\Delta^i=\Delta^{i-1}\Delta$. (2) and (3) follow immediately from (1) for $j=0$ and $j=1$, 
respectively, plus induction. (4) is proven by induction on $i$. To prove (5), write 
$\Delta^{p-2}x^{2p-2}=\Delta^{p-2}(x^px^{p-2})$ and use (3) and (4). (6) follows from (5) and (3). (7) is then obvious 
from (6) and (2).
\end{proof}

%
%
%
%
%
%
%
%

\section{Modular rank two reflection groups in topology}
Real reflection groups appear in topology as Weyl groups of compact Lie groups and reflection groups over $\Fp$ appear 
as mod $p$ reductions of these Weyl groups. Then, the invariant theory of these groups is crucial in understanding the 
cohomology of the compact Lie groups and the cohomology of their classifying spaces. Actually, the theory of 
generalized invariants was invented (see \cite{Demazure} and \cite{Kac-Lie}) in order to understand the mod $p$ 
cohomology of the compact connected Lie groups. Furthermore, the family of compact Lie groups has been extended in 
several directions (Kac-Moody groups \cite{Nitu}, $p$-compact groups \cite{Dwyer-Wilkerson}, $p$-local finite groups 
\cite{BLO}) and in each case, the invariant theory of reflection groups over $\Fp$ plays an important role in 
understanding the properties of these objects, mainly its cohomology.

Let $K$ be a compact connected Lie group or, more in general, a compact form of a Kac-Moody group (in the sense of 
\cite{Nitu}), with maximal torus $T$. Let $M$ be the weight lattice and let $\mathcal{S}=\{s_1,\ldots,s_n\}$ be the 
reflections with respect to a set of simple roots. $\mathcal{S}$ generates the Weyl group $W$ of $K$ and the symmetric 
algebra $S(M)$ can be identified to the cohomology of the classifying space $BT$. Choose a prime $p$ and denote $M_p$ 
the mod $p$ reduction of $M$ and $W_p\subset GL_n(\Fp)$ the mod $p$ reduction of the Weyl group $W$, acting on $M_p$. 
There is a map $$\psi:S(M_p)=H^*(BT;\Fp)\longrightarrow H^*(K/T;\Fp).$$ Then, the kernel of this map coincides with the 
ideal of generalized invariants $I(\mathcal{S})$: $$\ker\psi=I(\mathcal{S}).$$ This fundamental result is due to Kac 
(\cite{Kac-Lie}, see also \cite{Larry-Lie} and Theorem 6.1 in \cite{Nitu}).

Hence, some of the groups that we have investigated in the preceding sections should appear as mod $p$ reductions of 
Weyl groups. If the rank is two, then the Weyl group is either cyclic of order two or dihedral. It is easy to 
check that among the groups that we have considered in this paper (i.e.\ reflection groups in $GL_2(\Fp)$ of order 
divisible by $p$), the only dihedral groups are $GL_2(F_2)$ for $p=2$, and $U_{1,2}$, $U_{2,1}$, $U_{2,2}$ for any 
prime. Hence, these and the cyclic group $U_{1,1}$ for $p=2$, are the only groups of order divisible by $p$ that may 
appear as mod $p$ reductions of Weyl groups.

It is not difficult to compute $W_p$ for all compact connected Lie groups of rank two. We get groups or order divisible 
by $p$ in the following cases:

\begin{itemize}
 \item $SU(3)$ gives $W_2=GL_2(F_2)$ and $W_3=U_{2,1}$ with 
$\mathcal{S}=\left\{\psMatrix{-1}101\psMatrix{-1}001\right\}$.
\item $Sp(2)$ gives $W_2=U_{1,1}$.
\item $G_2$ gives $W_2=GL_2(F_2)$ and $W_3=U_{2,2}$ with 
$\mathcal{S}=\left\{\psMatrix{1}10{-1}\psMatrix{-1}001\right\}$.
\item $PSp(2)$ gives $W_2=GL_2(F_2)$.
\item $PSU(3)$ gives $W_2=GL_2(F_2)$ and $W_3=U_{1,2}$ 
with $\mathcal{S}=\left\{\psMatrix{1}10{-1}\psMatrix{1}00{-1}\right\}$. The case of this group at the prime 3 is the 
only case in which there is a discrepancy between ordinary and generalized invariants. We have $J_1=\langle 
x,y^6\rangle$ while $I(\mathcal{S})=\langle x,y^2\rangle$.
\item $U(2)$ gives $W_2=U_{1,1}$. 
\end{itemize}

We obtain all possible reflection groups, but only for very small primes. If we extend our scope to rank two Kac-Moody 
groups, then these reflection groups appear as mod $p$ Weyl groups $W_p$ for all primes. For any pair of 
positive integers $a,b$ with $ab>4$, let us denote $K(a,b)$ the compact form of the simply connected Kac-Moody group 
associated to the Cartan Matrix $\psMatrix2{-a}{-b}2$. The groups $K(a,b)$, their Weyl groups, and the cohomology of 
the classifying spaces $BK(a,b)$ were investigated in \cite{ABKS}. From Propositions 7.1, 6.2 and 3.2 in \cite{ABKS} 
one deduces the following:

\begin{itemize}
 \item $K(a,b)$ gives $W_p=U_{2,2}$ for any prime $p>2$ which divides $a$ but not $b$.
 \item $K(a,b)$ gives $W_p=U_{2,1}$ for any prime $p>2$ which divides $4-ab$.
 \item $PK(a,b)$ gives $W_p=U_{1,2}$ for any prime $p>2$ such that $p\vert4-ab$ and $p^2\!\!\not\vert4-ab$.
\end{itemize}

The identifications of these Weyl groups and their invariant theory was crucial in the cohomological computations in 
\cite{ABKS}.

The rank two reflection groups which are not dihedral groups (i.e.\ the groups $L^r$ and $U_{a,b}$ for $a>2$ or $b>2$) 
could appear as mod $p$ reductions $W_p$ of Weyl groups of $p$-compact groups. The classification of all $p$-compact 
groups is available (see \cite{Viru} and \cite{AG2}) and if we check all the cases of rank two we see that there is 
only one $p$-compact group which realizes a modular rank two reflection group not considered above. The relevant prime 
is $p=3$ and the reflection group is $GL_2(\F_3)$. It turns out that this reflection group is the group number 12 in 
the list of complex reflection groups (\cite{She-Todd}) and it can be lifted to a 3-adic reflection group $W$. In 1984 
Zabrodsky proved the existence of a 3-compact group of rank two $X_{12}$ having $W$ as Weyl group (\cite{Zab}, see also 
\cite{Jaume}). This completes the list of reflection groups in $GL_2(\Fp)$ of order divisible by $p$ that have appeared 
(so far) as mod $p$ Weyl groups of compact Lie groups or other topological groups generalizing compact Lie groups.


\end{document}